\theoremstyle{definition}
\newtheorem{theorem}{Theorem}[section]
\newtheorem{proposition}[theorem]{Proposition}
\newtheorem{lemma}[theorem]{Lemma}
\newtheorem{definition}[theorem]{Definition}
\newtheorem{remark}[theorem]{Remark}
\numberwithin{equation}{section}
\def\cB{\mathcal{B}}
\def\cD{\mathcal{D}}
\def\cF{\mathcal{F}}
\def\cH{\mathcal{H}}
\def\cN{\mathcal{N}}
\def\D{\mathbb{D}}
\def\e{\varepsilon}
\def\E{\mathbb E}
\newcommand{\R}{\mathbb{R}}
\newcommand{\N}{\mathbb{N}}
\newcommand{\W}{\dot{W}}
\newcommand{\ud}{\ensuremath{ \mathrm{d}} }
\newcommand{\Ceil}[1]{\left\lceil #1 \right\rceil}
\newcommand*{\one}{{{\rm 1\mkern-1.5mu}\!{\rm I}}}
\begin{document}

\title[Stochastic fractional diffusion equations]{Stochastic fractional diffusion equations with  Gaussian noise rough in space}

\author[Y. Guo]{Yuhui Guo}
\address{Y. Guo:
 School of Mathematics, Shandong University, Jinan, Shandong, 250100, China.}
 \email{\url{guoyuhui@mail.sdu.edu.cn}}

\author[J. Song]{Jian Song}
\address{J. Song: Research Center for Mathematics and Interdisciplinary Sciences, Shandong University, Qingdao, Shandong, 266237, China.}
 \email{\url{txjsong@sdu.edu.cn}}

\author[X. Song]{Xiaoming Song}
\address{X. Song: Department of Mathematics, Drexel University, Philadelphia, PA 19104, USA. }
 \email{\url{xs73@drexel.edu}}

\subjclass[2010]{ Primary 60H15, Secondary 60G22, ~60G60, ~26A33, ~60H07}

\keywords{Stochastic  partial differential equation,
fractional Brownian motion,
Malliavin calculus,
Mittag-Leffler function,
moment estimates,
H\"older continuity}

\date{}
\begin{abstract}
  In this article, we consider the following stochastic  fractional diffusion equation
  \begin{equation*}
    \left(\partial^{\beta}+\dfrac{\nu}{2}\left(-\Delta\right)^{\alpha / 2}\right) u(t, x)= \lambda\: I_{0_+}^{\gamma}\left[u(t, x) \dot{W}(t, x)\right] ,\quad t>0,\: x \in \mathbb{R},
  \end{equation*}
  where $\alpha>0$, $\beta\in(0,2]$, $\gamma \ge 0$, $\lambda\neq0$, $\nu>0$, and $\dot{W}$ is a Gaussian noise which is white or fractional in time and rough in space. We prove the existence and uniqueness of the solution in the It\^o-Skorohod sense and obtain the  lower and upper bounds for  the $p$-th moment. The H\"older regularity of the solution is also studied.
\end{abstract}

\maketitle{
\hypersetup{linkcolor=black}
\tableofcontents
}

\section{Introduction}

Consider the stochastic space-time fractional diffusion equations (FDEs):
\begin{equation}\label{e:fde}
	\begin{cases}
		\left(\partial^{\beta}+\dfrac{\nu}{2}\left(-\Delta\right)^{\alpha / 2}\right) u(t, x)= \lambda\: I_{0_+}^{\gamma}\left[u(t, x) \dot{W}(t, x)\right] , & t>0, x \in \mathbb{R},   \\
		u(0, \cdot)=\mu_0,                                                                                                                                                           & \text { if } \beta \in(0,1], \\
		u(0, \cdot)=\mu_0, \quad \dfrac{\partial}{\partial t} u(0, \cdot)=\mu_1,                                                                                                      & \text { if } \beta \in(1,2],
	\end{cases}
\end{equation}
where $\alpha>0$, $\beta\in(0,2]$, $\gamma\ge 0$, $\lambda\ne 0$, $\nu>0$, and $\W$ is a Gaussian noise.  In \eqref{e:fde} we use $\partial^\beta$ to denote the {\em Caputo fractional differential} operator:
\begin{equation}\label{e:CFDO}
  \partial^{\beta} f(t):=
\begin{cases}
  { \displaystyle   \dfrac{1}{\Gamma(n-\beta)}\int_{0}^{t}\frac{f^{(n)}(\tau)}{(t-\tau)^{\beta+1-n}}\ud\tau}, & \mbox{if } \beta \neq n, \\  \\
  \dfrac{\ud^n}{\ud t^n}f(t),                                                                                    & \mbox{if }  \beta = n,
\end{cases}
\end{equation}
where $n=\lceil\beta\rceil$ is the smallest integer not smaller than $\beta$ and
 $\Gamma(x) = \int_{0}^{\infty}e^{-t}t^{x-1}\ud t$ is the gamma function. We denote by
$I_{0_+}^\gamma$  the Riemann-Liouville integral in the time variable:
\begin{equation*}
  (I_{0+}^{\gamma}f)(t):=\frac{1}{\Gamma(\gamma)} \int_{0}^{t}f(r)(t-r)^{\gamma-1}\ud r.
\end{equation*}

For any $H\in(0, 1)$, let $\Lambda_H(x)=\frac12 (|x|^{2H})''$ denote half of the second derivative of $|x|^{2H}$ in the sense of distribution.  Let $B^H(t)$ be a fractional Brownian motion with Hurst parameter $H\in(0,1)$ and $\dot B^H(t) := \frac{d}{dt}B^H(t)$. Then we have  $\E[\dot B^H(t) \dot B^H(s)]= \Lambda_H(t-s)$. In particular,
\begin{equation*}
\Lambda_H(t)=\begin{cases} H(2H-1) |t|^{2H-2}, & \text{ if } H>1/2,\\
\delta(t), & \text{ if } H=1/2.
\end{cases}
\end{equation*}
We remind that $\Lambda_H$ is  nonnegative for $H\in[1/2, 1)$  while it is not nonnegative anymore for $H\in(0,1/2)$, which makes the treatments for the two cases  different. Throughout this article, we assume that the Gaussian noise $\W$ has covariance
\begin{equation*}
  \E\left[\dot{W}(t,x)\dot{W}(s,y)\right]=\Lambda_{H_0}(t-s)\Lambda_H(x-y),
\end{equation*}
with $H_0\in[1/2,1)$ and $H\in(0, 1/2)$, that is, we consider the case that $\W$ is   \emph{white} (i.e., $H_0=1/2$) or \emph{fractional} (i.e., $H_0\in(1/2,1)$) in time  and \emph{rough} in space (i.e., $H\in(0, 1/2)$).

 In this paper,  we aim to study FDEs \eqref{e:fde} with $\alpha>0$, $\beta\in(0,2]$, $\gamma\ge 0$, $\lambda\ne 0$, $\nu>0$, and $\W$ being a Gaussian noise that is white/fractional in time and rough in space. Below we briefly recall recent developments on FDEs as well as stochastic heat equations (SHEs) and stochastic wave equations (SWEs), which by no means is complete.

The FDEs \eqref{e:fde} driven by various types of Gaussian noise have been studied in literature. In Chen {\em et al.} \cite{chen2017space}, the existence and uniqueness of the solution, and the moment bounds of the solution were obtained, when $\dot W$ is a  Gaussian noise which is not rough in time or space, and $\alpha\in(0,2]$, $\beta\in(1/2,2)$, $\gamma=0$. Chen {\em et al.} \cite{chen2019nonlinear} investigated FDEs with space-time white noise when $\alpha\in(0,2]$, $\beta\in(0,2)$, $\gamma\ge0$ and also obtained  the sample path regularity of the solution. Then, Chen {\em et al.} \cite{chen2022moments} extended some corresponding results to the case for all $\alpha>0$, $\beta\in(0,2]$ and $\gamma\ge0$. Recently, Chen and Eisenberg \cite{chen2022interpolating} studied  FDEs with  time-independent multiplicative Gaussian noise.  We also refer to the works \cite{chen2017nonlinear,chen2015fractional,mijena2015space} for other cases of equation~\eqref{e:fde}.

Several methods have been developed to study the existence and uniqueness of the solution of the stochastic partial differential equations driven by rough noise in time or space. For SHEs and SWEs with multiplicative Gaussian noise  white in time and rough in space, Balan {\em et al.} \cite{balan2015spdes} used  the classical method of Picard iterations to prove the existence and uniqueness of the mild solution. Hu {\em et al.} \cite{hu2017stochastic} obtained the uniqueness by a uniform estimate of stochastic convolution and then proved the existence by taking approximations obtained by regularizing the noise and using a compactness argument on a suitable space of trajectories. Instead of the above two methods, in this paper we use the approach of Malliavin calculus which was developed  in the study of SHEs by Hu and Nualart \cite{hu2009stochastic}. We refer to \cite{balan2017intermittency,hu2016parabolic,hns11,song2020fractional} for more related results on  SHEs and SWEs.

 The intermittency property  was obtained by Bertini and Cancrini \cite{bc95} for SHEs with space-time white noise  (see also \cite{Kh14}). For SHEs with fractional noise, the large time behavior was characterized by Song \cite{song12} and the exact Lyapunov exponents were obtained by  Chen {\em et al.}~\cite{chsx15} using Feynman-Kac formulae and large deviation techniques (see also \cite{chss18} for fractional SHEs).  For SHEs with noise rough in time or space, Hu {\em et al.} \cite{hu2017stochastic}, Huang {\em et al.} \cite{huang2017large1,huang2017large2}, and Chen \cite{chen2019parabolic,chen2020parabolic}   obtained the Lyapunov exponents. The intermittency property of SWEs is more involved due to the lack of Feynman-Kac formula. For SWEs with noise that is white in time and white/colored in space, the exact second order Lyapunov exponents  were obtained by  Balan and Song \cite{balan2019second}, and for SWEs with time-independent noise, the Lyapunov exponents were  obtained by \cite{bcc22}.  We also refer to
 Balan {\em et al.} \cite{balan2017intermittency} for the intermittency property of SWEs with noise rough in space. Recently, the matching moment bounds for a class of SPDEs, including in particular SWEs, were obtained by Hu {\em et al.} \cite{hu2022matching} for  space-time colored noise and by Chen {\em et al.} \cite{chen2022moments} for space-time white noise, both of which employ Feynman diagram formula.

The H\"older continuity for SHEs and SWEs has been fully investigated in literature. We refer to \cite{balan2019holder,balan2017hyperbolic,hu2015stochastic,song2017class,song2020fractional} and the references therein. In contrast, the H\"older continuity for  FDEs is more involved and was firstly investigated by Chen {\em et al.} \cite{chen2019nonlinear} for the space-time white noise case with $\alpha\in(0,2]$, $\beta\in(0,1]$, $\gamma\in[0,1-\beta]$; then, Chen and Hu \cite{chen2022holder} extended this result to $\beta\in(0,2)$, $\gamma\ge0$ using  local fractional derivative.

To conclude the introduction, we provide some comments on our results. Firstly, in Theorem~\ref{th:DL},  the condition for the existence and uniqueness of the solution is consistent  with the existing results when \eqref{e:fde} reduces to classical equations such as SHE or SWE (see Remark \ref{re:dl-compare}).  Secondly, due to the lack of the Feynman-Kac formula, the precise lower bound for $p$th moment of the solution to \eqref{e:fde} is still an open problem, and  we provide a preliminary result based on the moment method (see \eqref{low-b}). It is expected the optimal lower bounds which match the upper bounds \eqref{up-b} may be obtained by using the techniques developed in \cite{hu2022matching, chen2022moments}. Finally, we find a new and simple method to study the H\"older continuity of the solution. See  Theorem \ref{th:holder} as well as  Remark \ref{re:holder-compare} and Remark \ref{rem:explanation} for more details on our result and method.

This paper is organized as follows. In Section \ref{se:pre}, we present some preliminaries on Malliavin calculus for  Gaussian noise $\dot W$ and then we recall some basic facts of Mittag-Leffler functions. The existence and uniqueness of the solution and the $p$-th moment bounds are obtained in Section \ref{se:ex-un}. In Section \ref{se:holder}, we derive the H\"older continuity of the solution in time and space. Finally, some technical results  are listed in Appendix~\ref{ap:lemma}.

\bigskip
Throughout this paper, we use $C$ to denote a generic constant that might change in different positions. $\Vert\cdot \Vert_p$ denotes the probability $L^p(\Omega)$-norm. $\lceil\cdot\rceil$ (resp. $\lfloor\cdot\rfloor$) is the ceiling (resp. floor) function. We use the convention $\N=\{ 1,2,\dots \}$.

\section{Preliminaries}\label{se:pre}

In this section, we provide some preliminaries on Malliavin calculus and Mittag-Leffler functions.

\subsection{Malliavin calculus}

In this part, we collect some basic knowledge on Malliavin calculus which will be used in the paper. We refer to \cite{nualart2006malliavin} for more details.

Let $\cD(\R_+\times\R)$ denote the space of all infinitely differentiable functions with compact support in $\R_+\times\R$ and let $\cH$ be the completion of $\cD(\R_+\times\R)$ with respect to the inner product $\langle \cdot,\cdot \rangle_\cH$, where
\begin{equation*}
  \langle \varphi,\psi \rangle_\cH := \int_{\R_+^2}\int_{\R} \cF\varphi(t,\cdot)(\xi) \overline{\cF\psi(s,\cdot)(\xi)} \Lambda_{H_0}(t-s) \mu(\ud\xi)\ud t\ud s.
\end{equation*}
Here, the so-called spectral  measure $\mu$ is given by \[\mu(\ud\xi)=c_H|\xi|^{1-2H}\ud\xi,\] with $H\in(0,\frac12)$ and $c_H=\frac{\Gamma(2H+1)\sin(\pi H)}{2\pi}$, and $\cF\varphi(t,\cdot)(\xi)$ is the Fourier transform of $\varphi$ in the space, that is,
\[
\cF\varphi(t,\cdot)(\xi)=\int_{\R}e^{-i\xi x}\varphi(t,x)dx.
\]
Let $W=\{W(\varphi):\varphi\in\cD(\R_+\times\R)\}$  be a centered Gaussian process with covariance
\begin{equation*}
  \E[W(\varphi)W(\psi)]=\langle \varphi,\psi \rangle_\cH.
\end{equation*}
Thus,  the map $\varphi\mapsto W(\varphi)$ is an isometry and can be extend to $\cH$. We say that $W(\varphi)$ is the Wiener integral of $\varphi$ with respect to $W$ which is also denoted by
\begin{equation*}
\int_{0}^{\infty}\int_{\R}\varphi(t,x)W(\ud t,\ud x):=  W(\varphi), \ \ \varphi\in\cH.
\end{equation*}

 The $n$-th Hermite polynomial is denoted by $H_n$,  and the $n$-th Wiener chaos is the closed linear span of the random variables $H_n(W(\varphi))$ for $\varphi\in\cH$  with $\Vert\varphi\Vert_\cH=1$. We denote the $n$-th Wiener chaos by $\cH_n$ and let $\cF$ be the $\sigma$-field generated by $\{W(\varphi): \varphi\in\cH\}$. Then, any random variable $F\in L^2(\Omega,\cF,P)$ has the following  Wiener chaos expansion:
\begin{equation*}
  F=\E[F]+\sum_{n\geq1}I_n(f_n),
\end{equation*}
where $f_n\in \cH^{\otimes n}$ and $I_n:\cH^{\otimes n}\rightarrow \cH_n$ is the multiple Wiener integral with respect to $W$. For any $f_n\in\cH^{\otimes n}$, we also denote
\begin{equation*}
 \int_{\R_+^n}\int_{\R^n}f_n(s_1,x_1,\dots,s_n,x_n)W(\ud s_1,\ud x_1)\dots W(\ud s_n,\ud x_n):=  I_n(f_n).
\end{equation*}
We have
\begin{equation*}
  I_n(f_n)=I_n(\tilde{f_n}),
\end{equation*}
where $\tilde{f_n}$ is the symmetrization of $f_n$ in variables $(s_i, x_i), i=1, \dots, n$:
\begin{equation}\label{e:sym}
  \tilde{f_n}(s_1,x_1,\dots,s_n,x_n)=\frac{1}{n!}\sum_{\sigma\in S_n}f(s_{\sigma(1)},x_{\sigma(1)},\dots,s_{\sigma(n)},x_{\sigma(n)}),
\end{equation}
with $S_n$ being the set of all permutations of $\{1,\dots,n\}$. We also have the following property, for any $f_n\in\cH^{\otimes n}$ and $g_m\in\cH^{\otimes m}$,
\begin{equation}\label{f1}
  \E[I_n(f_n)I_m(g_m)]=
  \begin{cases}
   n! \langle \tilde{f}_n,\tilde{g}_m \rangle_{\cH^{\otimes n}}, & \mbox{if } n=m, \\
    0, & \mbox{if } n\neq m.
  \end{cases}
\end{equation}
Then, for any random variable $F\in L^2(\Omega,\cF,P)$, by \eqref{f1} we obtain
\begin{equation*}
  \E[|F|^2]=\sum_{n\geq0}\E[|I_n(f_n)|^2]=\sum_{n\geq0}n!\Vert \tilde{f_n}\Vert_{\cH^{\otimes n}}^2.
\end{equation*}
and in our case, the norm of $\cH^{\otimes n}$ is
\begin{equation}\label{e:norm-fn}
  \Vert f_n\Vert_{\cH^{\otimes n}}^2=\int_{\R_+^{2n}}\int_{\R^n}  \cF f(\mathbf{t},\cdot,t,x)(\boldsymbol \xi) \overline{\cF f(\mathbf{s},\cdot,t,x)(\boldsymbol \xi)} \prod_{j=1}^{n}\Lambda_{H_0}(t_j-s_j) \boldsymbol\mu(\ud\boldsymbol\xi)\ud\mathbf{t}\ud\mathbf{s},
\end{equation}
where we use the convention that $\ud\mathbf{s}:=\prod_{i=1}^{n}\ud s_i$ for $\mathbf{s}:=(s_1,\dots,s_n)$ and similarly for $\mathbf{t}$, $\ud\mathbf{t}$, $\boldsymbol\xi$ and $\boldsymbol\mu(\ud\boldsymbol\xi)=\prod_{j=1}^n|\xi_i|^{1-2H}\ud \xi_i$.
For the norm $\Vert \cdot\Vert_p,\ p\ge 2$ on a fixed Wiener chaos space $\cH_n$ we have the hypercontractivity that
\begin{equation}\label{pnorm-bound}
  \Vert F\Vert_p\le (p-1)^{n/2}\Vert F\Vert_2,\quad    F\in\cH_n.
\end{equation}

Now, we introduce Malliavin derivative and divergence operator (also called Skorohod integral).  Let the space $\D^{1,2}$ be the closure of the set of smooth and cylindrical random variables of the form
$F=f\left( W(\varphi_1),\dots,W(\varphi_n)\right)$, where $\varphi_i\in\cH$, $n\geq1$ and $f$ belongs to the space $C^\infty_b(\R^n)$ of  bounded $C^\infty$-functions on $\R^n$ whose partial derivatives of all orders are bounded,  under the norm
\begin{equation*}
  \Vert DF\Vert_{1,2}=\sqrt{\E[|F|^2]+\E[\Vert DF\Vert_\cH^2]},
\end{equation*}
where
\begin{equation*}
  DF:=\sum_{i=1}^{n} \frac{\partial f}{\partial x_i}\left( W(\varphi_1),\dots,W(\varphi_n)\right)\varphi_i
\end{equation*}
is the Malliavin derivative of $F$.

The divergence operator $\delta$ is defined as the adjoint of the operator $D$ by the following duality formula:
\begin{equation*}
  \E[F\delta(u)]=\E[\langle DF,u\rangle_\cH], ~~ F\in \D^{1,2},  u\in \text{Dom}\:\delta,
\end{equation*}
where $\text{Dom}\:\delta$ is the set of $u\in L^2(\Omega,\cH)$ such that
\begin{equation*}
  \left|\E[\langle DF,u\rangle_\cH]\right|\leq c \Vert F\Vert_2, ~~ \forall F\in \D^{1,2},
\end{equation*}
 where $c$ is some constant depending on $u$.

\subsection{Mittag-Leffler function}

For $a>0, b\in\mathbb{C}$, the two-parameter Mittag-Leffler function is defined by (see, e.g., \cite[Sect.1.2]{podlubny1998fractional}):
\begin{equation}\label{e:mlf}
  E_{a, b}(z):=\sum_{k=0}^{\infty}\frac{z^k}{\Gamma(a k+b)},
\end{equation}
and we use the convention $E_a(\cdot):=E_{a,1}(\cdot)$. For each $n\in\mathbb{N}$, direct computations yield
\begin{equation}\label{e:dmlf}
  \frac{\ud^n}{\ud z^n} \left(z^{b-1} E_{a,b}(\lambda z^a)\right) = z^{b-n-1} E_{a,b-n}(\lambda z^a),
\end{equation}
for $a>0$ and $b,  \lambda\in\mathbb{C}$.

 According to (1.8.28) and (1.8.31) in \cite{kilbas2006theory}, the asymptotic behavior of $E_{a,b}(z)$ can be described in two regimes: $0<a<2$ and $a= 2$.
\begin{proposition} \label{E-asymp}
For $z\in (-\infty, 0)$  with $z\to -\infty$, we have for $n\in\N$,
 \begin{equation*}
E_{a,b}(z)=
    \begin{cases}
    \displaystyle{   -\sum_{k=1}^{n}\frac{1}{\Gamma(b-ak)\cdot z^k} +O\left(\frac{1}{z^{n+1}}\right),} &\text{if } 0<a<2,  \\
      \displaystyle {(-z)^{(1-b)/2}\cos\left(\sqrt{-z}+\frac{\pi(1-b)}{2}\right) -\sum_{k=1}^{n}\frac{1}{\Gamma(b-2k)\cdot z^k}+O\left(\frac{1}{z^{n+1}}\right)},  & \text{if } a=2.
      \end{cases}
  \end{equation*}
  \end{proposition}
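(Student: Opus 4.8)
The plan is to derive both expansions from a single source --- the Hankel-type contour integral representation of the Mittag-Leffler function --- so that the two cases differ only in how one deformation argument is carried out. Concretely, I would start from
\begin{equation*}
  E_{a,b}(z) = \frac{1}{2\pi i a}\int_{\gamma(\varepsilon,\theta)} \frac{\zeta^{(1-b)/a}\, e^{\zeta^{1/a}}}{\zeta - z}\,\ud\zeta ,
\end{equation*}
valid for $z\notin\gamma(\varepsilon,\theta)$, where $\gamma(\varepsilon,\theta)$ comes in from infinity along $\arg\zeta=-\theta$, runs around the circle $|\zeta|=\varepsilon$ counterclockwise, and returns to infinity along $\arg\zeta=\theta$, with $\theta$ chosen so that $\pi a/2<\theta\le\min\{\pi,\pi a\}$. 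This representation I would either cite from \cite{kilbas2006theory,podlubny1998fractional} or re-derive by expanding $1/(\zeta^a-z)$ and using Hankel's formula $1/\Gamma(s)=\frac{1}{2\pi i}\int e^u u^{-s}\,\ud u$. Then I would insert $\frac{1}{\zeta-z}=-\sum_{k=1}^n \zeta^{k-1}z^{-k} - \zeta^n z^{-n}(\zeta-z)^{-1}$; substituting $u=\zeta^{1/a}$ in each of the finitely many resulting integrals and applying Hankel's formula term by term yields exactly $-\sum_{k=1}^n z^{-k}/\Gamma(b-ak)$, leaving a single remainder integral $R_n(z)$ that I need to bound by $O(|z|^{-n-1})$.

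For the regime $0<a<2$ I would specialize to $z<0$, i.e. $\arg z=\pi$, and fix $\theta$ with $\pi a/2<\theta<\pi$, which is possible precisely because $a<2$. The pole $\zeta=z$ then lies outside the region bounded by $\gamma(\varepsilon,\theta)$, so no residue term arises, and I would estimate $R_n(z)$ by splitting $\gamma(\varepsilon,\theta)$ into its circular arc (on which $e^{\zeta^{1/a}}$ is bounded and $|\zeta-z|\gtrsim|z|$) and its two rays (on which $\mathrm{Re}(\zeta^{1/a})=|\zeta|^{1/a}\cos(\theta/a)<0$ because $\theta/a>\pi/2$, hence the integrand decays exponentially); both pieces are $O(|z|^{-n-1})$, giving the first case. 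I would also note, for orientation, that when $|\arg z|<\theta$ the enclosed pole contributes the exponentially large residue $\frac{1}{a} z^{(1-b)/a}e^{z^{1/a}}$, which is exactly why no such term survives on the ray $\arg z=\pi$.

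For $a=2$ the constraint $\pi a/2<\theta$ forces $\theta=\pi$, so the contour degenerates onto the negative real axis and the point $z<0$ now sits on it. Here I would tilt the two rays symmetrically off the axis and, upon returning them to the limit, pick up the residue at $\zeta=z$ computed with each of the two branches $z^{1/2}=\pm i\sqrt{-z}$; these complex-conjugate contributions combine to the real oscillatory term $(-z)^{(1-b)/2}\cos(\sqrt{-z}+\pi(1-b)/2)$. The remaining branch-cut integral is handled just as in the previous case and supplies $-\sum_{k=1}^n z^{-k}/\Gamma(b-2k)+O(|z|^{-n-1})$, which is the second case. Alternatively --- and this is likely the most economical writeup --- I would simply invoke formulas (1.8.28) and (1.8.31) of \cite{kilbas2006theory} verbatim, since the proposition is precisely their restriction to $\arg z=\pi$.

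The step I expect to be the main obstacle is the remainder estimate in the boundary case $a=2$: there $\mathrm{Re}(\zeta^{1/2})=0$ on the rays, so the integrand $e^{\zeta^{1/2}}$ no longer decays and one must instead exploit the oscillation of $e^{\pm i|\zeta|^{1/2}}$ along the ray (or keep the rays tilted by a small fixed angle and bookkeep the pole crossing carefully) in order to retain the $O(|z|^{-n-1})$ control. Everything else --- the term-by-term integration, Hankel's formula for $1/\Gamma$, and the elementary contour bounds when $0<a<2$ --- is routine.
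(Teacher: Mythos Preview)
The paper does not actually prove this proposition: it is stated without proof, with the preceding sentence ``According to (1.8.28) and (1.8.31) in \cite{kilbas2006theory}\dots'' serving as the entire justification. Your closing remark --- simply invoking those two formulas from \cite{kilbas2006theory} and specializing to $\arg z=\pi$ --- is therefore exactly what the paper does, and would suffice here.

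Your longer Hankel-contour argument is a correct outline of how those formulas are established in the cited references (and in Podlubny), so it goes well beyond what the paper provides. The identification of the borderline case $a=2$ as the delicate point is accurate: the standard treatment keeps the rays at a fixed angle $\theta<\pi a$ strictly less than $\pi$ while $a<2$, and for $a=2$ one either tilts the contour or invokes the explicit relation $E_{2,b}(-w)$ in terms of trigonometric/Bessel-type functions to extract the oscillatory leading term directly. Either route works; for the purposes of this paper, the citation is all that is needed.
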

\bigskip

We introduce two constants defined via Mittag-Leffler functions that will be used in the sequel. Let $ \alpha>0, \beta>0, \nu>0$ be the given coefficient constants in \eqref{e:fde}, for any $a\in \R, \gamma_1,\gamma_2\in\R$,  we denote
\begin{align}
  C_{a, \gamma_1,\gamma_2}:= & \left(2^{-1}\nu\right)^{-\frac{a+1}{\alpha}}\int_{\R}E_{\beta,\gamma_1}(-|\xi|^\alpha)E_{\beta,\gamma_2}(-|\xi|^\alpha)|\xi|^a\ud \xi ,\label{e:cgamma12}\\
  C_{a,\gamma}:= & C_{a, \gamma,\gamma}\label{e:cgamma}.
\end{align}

\section{Existence, uniqueness and moment bounds of the solution} \label{se:ex-un}

In this section, we give the definition of \emph{a mild Skorohod solution} to \eqref{e:fde}, and then we provide a necessary and sufficient condition for its existence and uniqueness. We also establish the $p$-moment bounds of the solution.

To define a solution to \eqref{e:fde}, we first   solve its deterministic counterpart:
  \begin{align} \label{e:pde}
    \begin{cases}
      \left(\partial^\beta + \dfrac{\nu}{2} (-\Delta)^{\alpha/2} \right) u(t,x)= I_{0+}^\gamma\left[f(t,x)\right], & \qquad t>0,\: x\in\R^d, \\[1em]
      \left.\dfrac{\partial^k}{\partial t^k} u(t,x)\right|_{t=0}=\mu_k(x),                                          & \qquad 0\le k\le \Ceil{\beta}-1, \:\: x\in\R^d.    \end{cases}
  \end{align}
As shown in \cite[Theorem C.1]{chen2022moments}, the solution to \eqref{e:pde} is
\begin{equation}\label{e:sol-pde}
  u(t,x) = J_0(t,x) + \int_0^t \ud s \int_{\R^d} \ud y\: f(s,y) \: Y(t-s,x-y)
\end{equation}
where $J_0(t,x)$ is the solution to the homogeneous equation (namely, equation \eqref{e:pde} with $f\equiv0$):
\begin{equation}\label{e:J0}
  J_0(t,x) =
  \begin{cases} \displaystyle
    \int_{\R} Z(t,x-y)\mu_0(y)\ud y                                       & \text{if $\beta\in(0,1]$},  \vspace{0.2cm}\\
   \displaystyle \int_{\R} Z^*(t,x-y)\mu_0(y)\ud y + \int_{\R} Z(t,x-y)\mu_1(y)\ud y   & \text{if $\beta\in(1,2]$}.
  \end{cases}
\end{equation}
For functions $Y(t,x)$,  $Z(t,x)$, and $Z^*(t,x)$
appearing in \eqref{e:sol-pde}, we have their  Fourier transforms in spatial variable:
\begin{align}
     \cF Z(t,\cdot)(\xi) =& t^{\lceil\beta\rceil-1}E_{\beta,\lceil\beta\rceil}(-2^{-1}\nu t^\beta|\xi|^\alpha), \label{e:fourier-Z}\\
     \cF Y(t,\cdot)(\xi) =& t^{\beta+\gamma-1}E_{\beta,\beta+\gamma}(-2^{-1}\nu t^\beta|\xi|^\alpha), \label{e:fourier-Y}\\
     \cF Z^*(t,\cdot)(\xi) =& E_{\beta}(-2^{-1}\nu t^\beta|\xi|^\alpha),\ \mbox{if}\ \beta\in(1,2],\label{e:fourier-Z*}
\end{align}
where the function $E_{a,b}$ is given in \eqref{e:mlf}.

Throughout this paper, we assume that  the initial data $\mu_0>0$ and $\mu_1\ge0$ are constants. Then,  by \eqref{e:J0}, \eqref{e:fourier-Z} and $\eqref{e:fourier-Z*}$ we get
\begin{equation*}
  J_0(t,x) =
  \begin{cases}
    \mu_0\cF Z(t,\cdot)(0)=\mu_0,                              & \text{if $\beta\in(0,1]$}, \\
    \mu_0\cF Z^*(t,\cdot)(0)+\mu_1\cF Z(t,\cdot)(0)=\mu_0+\mu_1t,   & \text{if $\beta\in(1,2]$},
  \end{cases}
\end{equation*}
which does not depend on $x$ and hence is denoted by $J_0(t)$ in the sequel.

Let $\cB_b(\R)$ denote the collection of Borel sets of $\R$ with finite Lebesgue measure, and let
\begin{equation*}
  \cF_t=\sigma(W_s(A):0\le s\le t, A\in\cB_b(\R))\vee\cN,\quad t\geq0,
\end{equation*}
be the natural filtration augmented by the $\sigma$-field $\cN$ generated by all $P$-null sets.

\begin{definition}\label{def:solution}
  We say that an $(\mathcal F_t)$-adapted random field $\{u(t,x), t\ge 0, x\in\R\}$ is a mild Skorohod solution to \eqref{e:fde}, if $\E[|u(t,x)|^2]<\infty$ and
 \begin{equation}\label{e:fde'}
  u(t,x) = J_0(t)+ \lambda\int_{0}^{t}\int_{\R}Y(t-s,x-y) u(s,y)W(\ud s,\ud y),
\end{equation}
 for all $t\geq0$ and $x\in \R$,
where the integral on the right-hand side is a Skorohod integral.
\end{definition}

For a mild Skorohod solution $u(t,x)$, noting that $\E[|u(t,x)|^2]<\infty$, we have its unique chaos expansion
\begin{equation}\label{e:u-chaos}
u(t,x) =J_0(t) +\sum_{n=1}^\infty I_n(f_n),  \end{equation}
where $I_n$ is the $n$th Wiener integral and    \begin{equation}\label{e:fn}
  f_n(\mathbf{s},\mathbf{x}, t, x)=  f_n(s_1,x_1,\dots,s_n.x_n,t,x)= \lambda^n \prod_{i=1}^{n} Y(s_{i+1}-s_{i},x_{i+1}-x_i)J_0(s_1)\one_{\{0<s_1<\dots<s_n<t\}}.
  \end{equation}
  Here we use the convention $s_{n+1}=t$ and $x_{n+1}=x$. Let $\tilde f_n$ be the symmetrization of $f_n$ in the sense of \eqref{e:sym}, i.e,
  \begin{equation}\label{e:gn}
  \begin{split}
 \tilde{f}_n(\mathbf{s},\mathbf{x}, t, x)   = &  \tilde{f_n}(s_1,x_1,\dots,s_n.x_n,t,x) \\
    = & \frac{\lambda^n}{n!}\sum_{\sigma\in S_n}\bigg(Y(t-s_{\sigma(n)},x-x_{\sigma(n)})\cdots Y(s_{\sigma(2)}-s_{\sigma(1)},x_{\sigma(2)}-x_{\sigma(1)})\\
    & \qquad \qquad\quad \times J_0(s_{\sigma(1)})\one_{\{0<s_{\sigma(1)}<\dots<s_{\sigma(n)}<t\}}\bigg)\\
    = & \frac{\lambda^n}{n!}\bigg(Y(t-s_{\rho(n)},x-x_{\rho(n)})\cdots Y(s_{\rho(2)}-s_{\rho(1)},x_{\rho(2)}-x_{\rho(1)}) J_0(s_{\rho(1)})\bigg),
    \end{split}
  \end{equation}
where $\rho\in S_n$ is the permutation such that $0<s_{\rho(1)}<\dots<s_{\rho(n)}<t$. In the following we will use $s_{\rho(n+1)}$ to denote $t$.

  The Fourier transform of $\tilde{f}_n$ is given by
  \begin{equation}\label{eqn-Fourier-g}
  \cF\tilde{f}_n(\mathbf{s}, \cdot, t,x)(\boldsymbol\xi)=\frac{\lambda^n}{n!}e^{-ix(\xi_1+\dots+\xi_n)} J_0(s_{\rho(1)}) \prod_{j=1}^n\cF Y(s_{\rho(j+1)}-s_{\rho(j)}, \cdot)(\xi_{\rho(1)}+\dots+\xi_{\rho(j)}).
  \end{equation}

We are ready to   state and prove the main result of this section.

\begin{theorem}\label{th:DL}
Let $H_0\in[\frac12,1)$,  $H\in(0,1/2)$, and the condition
\begin{equation}\label{e:DL}
 \begin{cases}
  2\alpha+\frac{\alpha}{\beta} \min\left(2\gamma-2+2H_0,2\gamma-1+\frac{\beta(1-2H)}{\alpha},0\right)>3-4H, & \mbox{if } \beta\in(0,2), \\
   \alpha\min(1+\gamma,2)>3-4H, & \mbox{if } \beta=2.
 \end{cases}
\end{equation}
hold.  Then equation \eqref{e:fde} has a unique mild Skorohod solution $u(t,x)$ in the sense of Definition~\ref{def:solution}.
Moreover, the solution $u(t,x)$ has  finite $p$-th moment for all $p\ge 2$, and there exist constants $C_1,C_2$ independent of $t$, $x$ and $p$, such that for all $t\ge0$, $x\in\R$ and $p\ge2$,
\begin{equation}\label{up-b}
  \Vert u(t,x)\Vert_p \le  J_0(t)C_1 \exp \left( C_2 |\lambda|^{\frac{2}{2H_0\theta+1}} p^\frac{1}{2H_0\theta+1} t^\frac{2H_0(\theta+1)}{2H_0\theta+1} \right),
\end{equation}
where we denote
\begin{equation}\label{e:theta}
  \theta:={\frac{1}{2H_0}\left(2\beta+2\gamma-2-\frac{\beta(2-2H)}{\alpha}\right)},
\end{equation}
for $\beta\in(0,2]$.

Moreover,  in the case $\beta\in(0,1]$ and  the case $\beta=2, \gamma=0$, there exist constants $c_1,c_2$ independent of $t$, $x$ and $p$, such that for all $t\ge0$, $x\in\R$ and $p\ge2$,
\begin{equation}\label{low-b}
  \Vert u(t,x)\Vert_p \ge \mu_0c_1\exp\left(  c_2|\lambda|^{\frac{2}{2H_0\theta+1}}t^\frac{2H_0(\theta+1)}{2H_0\theta+1} \right).
\end{equation}
\end{theorem}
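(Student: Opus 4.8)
\medskip
\noindent\emph{Proof strategy.}
I would follow the Malliavin calculus / Wiener chaos approach of Hu and Nualart \cite{hu2009stochastic}. Formally iterating the Skorohod integral equation \eqref{e:fde'} produces the chaos expansion \eqref{e:u-chaos}--\eqref{e:fn}, and by the general theory a mild Skorohod solution with $\E[|u(t,x)|^2]<\infty$ exists \emph{if and only if}
\begin{equation*}
  \sum_{n\ge 0} n!\,\Vert \tilde f_n\Vert_{\cH^{\otimes n}}^2 <\infty;
\end{equation*}
it is then automatically unique, since any $L^2$-solution must possess precisely this expansion (plug an arbitrary solution into \eqref{e:fde'} and match Wiener chaoses). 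Thus the whole theorem reduces to a sharp two-sided control of $n!\,\Vert \tilde f_n\Vert_{\cH^{\otimes n}}^2$, which I would carry out starting from the Fourier representation \eqref{e:norm-fn} together with \eqref{eqn-Fourier-g}.

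\medskip
\noindent\emph{Upper bound.}
The temporal and spatial structures are treated separately. For $H_0=\tfrac12$ the factor $\prod_j \Lambda_{1/2}(t_j-s_j)$ collapses the $2n$ time integrals onto the diagonal $\mathbf t=\mathbf s$; for $H_0\in(\tfrac12,1)$ one invokes a standard inequality of the type recorded in Appendix~\ref{ap:lemma} to bound $\int_{[0,t]^{2n}}\prod_j \Lambda_{H_0}(t_j-s_j)\,g(\mathbf t)g(\mathbf s)\,\ud\mathbf t\,\ud\mathbf s$ by $C^n$ times a weighted simplex integral of $g^2$, which is what produces the exponent $2H_0$ in the time increments. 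For the spatial factor, after the change of variables $\eta_j:=\xi_{\rho(1)}+\cdots+\xi_{\rho(j)}$ in \eqref{e:norm-fn}, I would write $|\xi_j|^{1-2H}=|\eta_j-\eta_{j-1}|^{1-2H}\le C\big(|\eta_j|^{1-2H}+|\eta_{j-1}|^{1-2H}\big)$ (with $\eta_0:=0$), expand the product into $2^n$ terms, and bound each term by a product of one-dimensional integrals of the form $\int_\R |\cF Y(r,\cdot)(\eta)|^{k}|\eta|^{a}\,\ud\eta$ with $k\in\{1,2\}$. Using \eqref{e:fourier-Y} and the Mittag--Leffler asymptotics of Proposition~\ref{E-asymp} (which give, for $\beta<2$, $|\cF Y(r,\cdot)(\eta)|\asymp r^{\beta+\gamma-1}\min\{1,(r^\beta|\eta|^\alpha)^{-1}\}$, and for $\beta=2$ the slower oscillatory decay of order $r^{\beta+\gamma-1}(r^\beta|\eta|^\alpha)^{-\min\{(1+\gamma)/2,\,1\}}$), each such integral equals a power of $r$ times a finite constant — a multiple of $C_{a,\beta+\gamma}$ in the notation of \eqref{e:cgamma}. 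The requirement that the \emph{worst} of these constants be finite — namely the one from a term in which some $\eta_j$ carries the full doubled weight $2(1-2H)$ contributed by two consecutive kernels sharing that frequency — is exactly the condition \eqref{e:DL}.

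\medskip
\noindent\emph{Summation, $p$-th moments, and the lower bound.}
Collecting the above gives an estimate of the shape
\begin{equation*}
  n!\,\Vert \tilde f_n\Vert_{\cH^{\otimes n}}^2 \le \frac{C^n\,|\lambda|^{2n}\,J_0(t)^2\, t^{\,2H_0(\theta+1)n}}{\Gamma\!\big((2H_0\theta+1)n+1\big)},
\end{equation*}
with $\theta$ as in \eqref{e:theta} (one checks that \eqref{e:DL} forces $2H_0\theta+1>0$), so summing in $n$ via the elementary bound $\sum_{n\ge0} x^n/\Gamma(an+1)\le Ce^{Cx^{1/a}}$ for $a>0$ yields \eqref{up-b} for $p=2$. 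For general $p\ge 2$ I would combine the triangle inequality in $L^p(\Omega)$ with the hypercontractivity bound \eqref{pnorm-bound}, $\Vert I_n(\tilde f_n)\Vert_p\le (p-1)^{n/2}\Vert I_n(\tilde f_n)\Vert_2$, and resum (using Stirling to handle $\Gamma(\cdot)^{1/2}$); the factor $(p-1)^{n/2}$ enters the exponential and produces the stated $p^{1/(2H_0\theta+1)}$-dependence. For the lower bound, valid when $\beta\in(0,1]$ or $\beta=2,\gamma=0$, I would apply the moment method directly to \eqref{e:norm-fn}--\eqref{eqn-Fourier-g}: since for $H_0\ge\tfrac12$ and $H<\tfrac12$ both $\prod_j\Lambda_{H_0}(t_j-s_j)$ and $\boldsymbol\mu(\ud\boldsymbol\xi)=\prod_j|\xi_j|^{1-2H}\ud\xi_j$ are nonnegative, it suffices to restrict $\boldsymbol\eta$ to small pairwise-disjoint intervals and the time increments to be of order $t/n$, so that on this region the (real) factors $\cF Y$ keep a constant sign and all integrands are bounded below by $c^n$; this gives $n!\,\Vert\tilde f_n\Vert_{\cH^{\otimes n}}^2\ge c^n|\lambda|^{2n}\mu_0^2\, t^{2H_0(\theta+1)n}/\Gamma((2H_0\theta+1)n+1)$, and hence \eqref{low-b} after summation. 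The restriction to $\gamma=0$ when $\beta=2$ is forced by the oscillation of $E_2(-x)=\cos\sqrt{x}$, which is why this lower bound is only preliminary, as remarked after the theorem.

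\medskip
\noindent\emph{Main obstacle.}
The crux is the spatial estimate: one must control the $2^n$-fold expansion so that each resulting term contributes only a \emph{finite} constant raised to the power $n$, while simultaneously tracking the exact powers of the time increments in order to recover the precise exponents $\tfrac{2}{2H_0\theta+1}$, $\tfrac{1}{2H_0\theta+1}$ and $\tfrac{2H_0(\theta+1)}{2H_0\theta+1}$ in \eqref{up-b}. It is precisely here that the roughness $H<\tfrac12$ — equivalently the growth of the spectral measure $\mu(\ud\xi)=c_H|\xi|^{1-2H}\ud\xi$ — interacts with the Mittag--Leffler decay of $\cF Y$ at the borderline of integrability, and this borderline is what pins down the hypothesis \eqref{e:DL}; the stronger constant $3-4H$ (rather than $2-2H$) on its right-hand side is exactly the price of the doubled weight $2(1-2H)$ appearing when two consecutive spatial kernels share a frequency.
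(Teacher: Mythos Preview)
Your strategy for existence, uniqueness, and the upper bound is exactly the one carried out in the paper: after the Hardy--Littlewood--Sobolev-type inequality for the time correlation (Lemma~\ref{le:B.3}), one performs the change of variables $\eta_j=\xi_1+\cdots+\xi_j$, uses $|\eta_j-\eta_{j-1}|^{1-2H}\le|\eta_j|^{1-2H}+|\eta_{j-1}|^{1-2H}$, expands into $2^{n-1}$ monomials $\prod_j|\eta_j|^{a_j}$ with $a_j\in\{0,1-2H,2(1-2H)\}$, and then applies the one-dimensional scaling identity (Lemma~\ref{le:a1}) together with the simplex integral (Lemma~\ref{le:t}) and the summation Lemma~\ref{le2}. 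One small correction: after this reduction every spatial integral is of the form $\int_\R|\cF Y(r,\cdot)(\eta)|^2|\eta|^{a}\,\ud\eta$, so your case $k=1$ never occurs.

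Your lower-bound argument, however, diverges from the paper and, as sketched, would not recover the exponent $t^{2H_0(\theta+1)/(2H_0\theta+1)}$ when $H_0>\tfrac12$. Restricting $\boldsymbol\eta$ to a \emph{fixed} bounded region makes the Mittag--Leffler factors of order $1$, so each spatial factor contributes $\sim r^{2(\beta+\gamma-1)}$ instead of the correct $\sim r^{2\beta+2\gamma-2-\beta(2-2H)/\alpha}$; the missing power $\beta(2-2H)/\alpha$ changes the effective $\theta$ and hence the exponent of $t$ in the exponential (for $H_0=\tfrac12$ the exponent happens to be $1$ regardless of $\theta$, so there the loss is invisible). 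The paper instead (i) restricts to the alternating cone $\D_n=\{\eta_1\ge0,\eta_2\le0,\dots\}$, on which $|\eta_j-\eta_{j-1}|^{1-2H}\ge|\eta_j|^{1-2H}$, (ii) uses the nonnegativity of $\int_\R\cF Y(r,\cdot)(\eta)\,\cF Y(s,\cdot)(\eta)\,|\eta|^{1-2H}\ud\eta$ (Lemma~\ref{le:integral-space-nonnegative}; a trigonometric identity handles $\beta=2,\gamma=0$) to justify restricting the $\eta$-domain, (iii) bounds $|s_j-r_j|^{2H_0-2}\ge t^{2H_0-2}$ crudely, and then (iv) exploits the exact scaling $A_n(t)=t^{\,n(2\beta+2\gamma-\beta(2-2H)/\alpha)}A_n(1)$ together with an exponential-random-variable / Jensen trick and the Laplace-transform identity of Lemma~\ref{le:integral-time-low-b} to obtain $A_n(1)\ge c^n\big/\Gamma\!\big(n(2\beta+2\gamma-\beta(2-2H)/\alpha)+1\big)$. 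It is this scaling step, not a direct subregion estimate, that preserves the full $\theta$ in the final bound.
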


\begin{remark}\label{rem:theta}
Note that under condition \eqref{e:DL}, we have $2H_0\theta+1>0$. More precisely, for $\beta\in(0,2)$, \eqref{e:DL} yields $2\alpha+\frac{\alpha}{\beta} \left(2\gamma-1+\frac{\beta(1-2H)}{\alpha}\right)>3-4H$ which is equivalent to $ 2H_0\theta+1>0$, and for $\beta=2$ \eqref{e:DL} implies $\alpha(1+\gamma)>3-4H$ which is equivalent to $2H_0\theta>\frac2\alpha(1-2H)>0.$
\end{remark}

\begin{proof}
  We prove the case when $H_0\in(\frac12,1)$, while the case $H_0=\frac12$ is similar and easier, so we omit the details.

  {\em Step 1. } We shall prove that under condition \eqref{e:DL}, $u(t,x)$ given by \eqref{e:u-chaos} is a solution to \eqref{e:fde}, namely
  \begin{equation*}
    \E\left[ |u(t,x)|^2\right]=\sum_{n=0}^{\infty}n!\Vert \tilde{f_n}(\cdot,t,x)\Vert_{\cH^{\otimes n}}^2<\infty.
  \end{equation*}
  We will use the convention $\boldsymbol\xi:=(\xi_1,\dots,\xi_n)$ and similarly for $\mathbf s$, $\mathbf r$, $\boldsymbol\eta$ and $\boldsymbol\mu(\ud\boldsymbol\xi)=\prod_{j=1}^n |\xi_j|^{1-2H}\ud \xi_j$ in the following proof.  Using \eqref{e:norm-fn}, \eqref{eqn-Fourier-g}, Lemma \ref{le:B.3} and the integral form of Minkowski's inequality, we see that
  \begin{align}
      & n!\Vert \tilde{f_n}(\cdot,t,x)\Vert_{\cH^{\otimes n}}^2 \notag\\
    \le& \frac{1}{n!} J_0^2(t) C^n\lambda^{2n} \bigg( \int_{[0,t]^n} \bigg( \int_{\R^n} \prod_{j=1}^{n} \Big|\cF Y(s_{\rho(j+1)}-s_{\rho(j)},\cdot)(\xi_{\rho(1)}+\dots+\xi_{\rho(j)})\Big|^2 \boldsymbol\mu(\ud \boldsymbol\xi) \bigg)^\frac{1}{2H_0} \ud\mathbf s \bigg)^{2H_0} \notag\\
    = & (n!)^{2H_0-1} J_0^2(t) C^n\lambda^{2n} \bigg( \int_{T_n(t)} \bigg( \int_{\R^n} \prod_{j=1}^{n} |\cF Y(s_{j+1}-s_j,\cdot)(\xi_1+\dots+\xi_j)|^2  \prod_{j=1}^{n} |\xi_j|^{1-2H} \ud \boldsymbol\xi \bigg)^\frac{1}{2H_0} \ud\mathbf s \bigg)^{2H_0} \notag\\
    = & (n!)^{2H_0-1} J_0^2(t) C^n\lambda^{2n} \bigg( \int_{T_n(t)} \bigg( \int_{\R^n} \prod_{j=1}^{n} |\cF Y(s_{j+1}-s_j,\cdot)(\eta_j)|^2  \prod_{j=1}^{n} |\eta_j-\eta_{j-1}|^{1-2H} \ud \boldsymbol\eta \bigg)^\frac{1}{2H_0} \ud\mathbf s \bigg)^{2H_0} \label{e:est-f-n},
  \end{align}
  where $C>0$ is a generic constant independent of $(t,x)$, $\eta_0=0$, and \[T_n(t)=\{\mathbf{s}=(s_1,s_2,\dots,s_n);0<s_1<s_2<\dots<s_n<t\}.\]


  Using the inequality $(a+b)^p\leq a^p+b^p$ for $p\in(0,1)$ and $a,b\ge0$, we have
 \begin{equation}\label{e:ineq1}
   |\eta_j-\eta_{j-1}|^{1-2H}\leq (|\eta_j|+|\eta_{j-1}|)^{1-2H}\leq |\eta_{j-1}|^{1-2H}+|\eta_j|^{1-2H}.
 \end{equation}
Also noting that for nonnegative numbers $\{a_i\}_{i\in S}$ and $\{b_i\}_{i\in S}$ with index set $S$, we have
 \begin{equation}\label{e:ineq2}
   \prod_{i\in S} (a_i+b_i) = \sum_{I\subset S} \left(\prod_{i\in I}a_i\right) \left(\prod_{i\in S/I}b_i\right).
 \end{equation}
 Hence, combing \eqref{e:ineq1} and \eqref{e:ineq2} we have
 \begin{align}\label{e:estimate-eta}
      &\prod_{j=1}^{n}|\eta_j-\eta_{j-1}|^{1-2H} =  |\eta_1|^{1-2H} \prod_{j=2}^{n}|\eta_j-\eta_{j-1}|^{1-2H} \leq |\eta_1|^{1-2H} \prod_{j=2}^{n} \left( |\eta_{j-1}|^{1-2H}+|\eta_j|^{1-2H} \right)\notag\\
        &=|\eta_1|^{1-2H} \sum_{ J\subset \{2,\dots,n\}} \bigg(\prod_{ j\in J}|\eta_{j-1}|^{1-2H}\bigg) \bigg(\prod_{j\in S/J}|\eta_j|^{1-2H}\bigg)= \sum_{a\in \cD_n} \prod_{j=1}^{n} |\eta_j|^{a_j},
 \end{align}
 where $\cD_n$ is a set of cardinality $2^{n-1}$ consisting of multi-indices $a = (a_1,\dots,a_n)$ with the following properties:
 \begin{align}\label{e:a}
      &|a| = \sum_{j=1}^{n} a_j = n(1-2H);&& a_1\in\{1-2H,2(1-2H)\}; \notag\\
      &a_j\in\{ 0,1-2H,2(1-2H) \} \text{ for } j=2,\dots,n-1;&&    a_n\in\{ 0,1-2H\}.
 \end{align}
Applying \eqref{e:estimate-eta} to \eqref{e:est-f-n} yields
 \begin{align*}
      & n!\Vert \tilde{f_n}(\cdot,t,x)\Vert_{\cH^{\otimes n}}^2 \\
      \le  & (n!)^{2H_0-1} J_0^2(t) C^n\lambda^{2n} \bigg( \int_{T_n(t)} \bigg( \int_{\R^n} \prod_{j=1}^{n} |\cF Y(s_{j+1}-s_j,\cdot)(\eta_j)|^2 \times \sum_{a\in \cD_n} \prod_{j=1}^{n} |\eta_j|^{a_j} \ud \boldsymbol\eta \bigg)^\frac{1}{2H_0} \ud\mathbf s \bigg)^{2H_0}  \\
      \le  & (n!)^{2H_0-1} J_0^2(t) C^n\lambda^{2n}  \bigg(\int_{T_n(t)}\sum_{a\in \cD_n} \bigg(\int_{\R^n} \prod_{j=1}^{n} |\cF Y(s_{j+1}-s_j,\cdot)(\eta_j)|^2 \times |\eta_j|^{a_j} \ud \boldsymbol\eta\bigg)^\frac{1}{2H_0} \ud\mathbf s\bigg)^{2H_0} .
 \end{align*}
Note that we are assuming $H\in(0, 1/2),\alpha>0, \gamma\ge 0$. On one hand,  condition \eqref{e:DL} implies
\begin{equation*}
  \begin{cases}
    2(1-2H)<2\alpha-1, & \mbox{if } \beta\in(0,2), \\
    2(1-2H)<\alpha \min(1+\gamma,2)-1, & \mbox{if } \beta=2,
  \end{cases}
\end{equation*}
which yields the condition in \eqref{e:con-cg-wd} for $ a=a_j\in\{0, 1-2H, 2(1-2H)\}$.
Then, by Lemma \ref{le:a1}, we have
\begin{equation}\label{e:estimate-fn-time}
  \begin{split}
     & n!\Vert \tilde{f_n}(\cdot,t,x)\Vert_{\cH^{\otimes n}}^2 \\
     \le  & (n!)^{2H_0-1}J_0^2(t) C^n\lambda^{2n} \left( \sum_{a\in \cD_n}\int_{T_n(t)} \prod_{j=1}^{n}|s_{j+1}-s_j|^{\frac{1}{2H_0}(2\beta+2\gamma-2-\frac{\beta(a_j+1)}{\alpha})}  \ud\mathbf s\right)^{2H_0}.
  \end{split}
\end{equation}
On the other hand, assumption \eqref{e:DL} also yields
\begin{equation*}
  \begin{cases}
    2\beta+2\gamma-2-\frac{\beta(3-4H)}{\alpha}>-2H_0, & \mbox{if } \beta\in(0,2), \\
    2+2\gamma-\frac{2(3-4H)}{\alpha}>-2H_0, & \mbox{if } \beta=2,
  \end{cases}
\end{equation*}
and hence  we have
\begin{equation}\label{e:theta'}
\frac{1}{2H_0}(2\beta+2\gamma-2-\frac{\beta(a_j+1)}{\alpha})>-1 \text{ for all } \beta\in(0,2], a_j\in\{0, 1-2H, 2(1-2H)\}
\end{equation}
Then,   by Lemma \ref{le:t}, we obtain
\begin{equation}\label{e:time}
  \int_{T_n(t)} \prod_{j=1}^{n}|s_{j+1}-s_j|^{\frac{1}{2H_0}(2\beta+2\gamma-2-\frac{\beta(a_j+1)}{\alpha})}  \ud\mathbf s \le \frac{C^nt^{n(\theta+1)}}{\Gamma(n(\theta+1)+1)},
\end{equation}
where $\theta= {\frac{1}{2H_0}\left(2\beta+2\gamma-2-\frac{\beta(2-2H)}{\alpha}\right)}$  for either $\beta\in(0,2)$ or $\beta=2$.

Plugging \eqref{e:time} into \eqref{e:estimate-fn-time} and using the fact that $\cD_n$ is a set of cardinality $2^{n-1}$, we have
\begin{equation}\label{e:estimate-nfn}
  n!\Vert \tilde{f_n}(\cdot,t,x)\Vert_{\cH^{\otimes n}}^2 \le J_0^2(t) C^n\lambda^{2n} (n!)^{2H_0-1}\left( \frac{t^{n(\theta+1)}}{\Gamma(n(\theta+1)+1)}\right)^{2H_0}.
\end{equation}
By assumption \eqref{e:DL} we can obtain $2H_0\theta+1>0$ for either $\beta\in(0,2)$ or $\beta=2$. Then, from Lemma \ref{le2}, it follows that
\begin{equation*}
 \sum_{n\ge0} n!\Vert \tilde{f_n}(\cdot,t,x)\Vert_{\cH^{\otimes n}}^2 \le J_0^2(t) \sum_{n\ge0} \frac{C^n\lambda^{2n}t^{2nH_0(\theta+1)}}{(n!)^{2H_0\theta+1}} \le J_0^2(t) C_1\exp \left( C_2 |\lambda|^{\frac{2}{2H_0\theta+1}} t^{\frac{2H_0(\theta+1)}{2H_0\theta+1}}\right)<\infty,
\end{equation*}
for some positive constants $C_1$ and $C_2$ independent of $t$ and $x$.

{\em Step 2.} Let us  prove upper bound \eqref{up-b}. By Minkowski’s inequality and \eqref{pnorm-bound}, for $p\ge 2$,
  \begin{align*}
  \Vert u(t,x)\Vert_p &\leq \sum_{n\geq0} \Vert I_n(f_n(\cdot,t,x))\Vert_p \leq \sum_{n\geq0} (p-1)^{\frac n2}\Vert I_n(f_n(\cdot,t,x))\Vert_2 \\
       &= \sum_{n\geq0} (p-1)^{\frac n2}\left(n!\Vert \tilde{f_n}(\cdot,t,x)\Vert_{\cH^{\otimes n}}^2\right)^{1/2}.
 \end{align*}
 Plugging \eqref{e:estimate-nfn} into above equation, we obtain that
  \begin{align*}
       \Vert u(t,x)\Vert_p &\le J_0(t) \sum_{n\geq0}(p-1)^{\frac n2} C^n |\lambda|^{n} (n!)^{H_0-\frac{1}{2}} \left( \frac{t^{n(\theta+1)}}{\Gamma(n(\theta+1)+1)} \right)^{H_0}\\
       & \le J_0(t) \sum_{n\geq0} \frac{C^n |\lambda|^{n} p^{\frac n2} t^{nH_0(\theta+1)}} {(n!)^{H_0\theta+\frac{1}{2}}}
       \le J_0(t) C_1 \exp \left( C_2 |\lambda|^{\frac{2}{2H_0\theta+1}} p^\frac{1}{2H_0\theta+1} t^\frac{2H_0(\theta+1)}{2H_0\theta+1} \right),
  \end{align*}
  where the last two steps follow from Lemma \ref{le2}.

  {\em Step 3.} Finally, we prove the lower bound \eqref{low-b} for  the cases $\beta\in(0,1]$ and  $\beta=2, \gamma=0$. By \eqref{e:norm-fn},  we have
  \begin{align*}
       &n!\Vert \tilde{f_n}(\cdot,t,x)\Vert_{\cH^{\otimes n}}^2  \\
       &= n!C^{n}\int_{\R^{n}}\int_{[0,t]^{2n}} \cF \tilde{f}_n(\mathbf s, \cdot, t,x)(\boldsymbol\xi) \overline{\cF \tilde{f}_n(\mathbf r, \cdot, t,x)(\boldsymbol\xi)}\prod_{j=1}^{n}|s_j-r_j|^{2H_0-2}\ud\mathbf{s}\ud\mathbf{r} \boldsymbol\mu(\ud\boldsymbol\xi)\\
       & = n!\lambda^{2n}C^{n}\int_{\R^{n}}\int_{T^2_n(t)}\prod_{j=1}^{n}\cF Y(s_{j+1}-s_j,\cdot)(\xi_1+\dots+\xi_j)J_0(s_1) \\
       &\quad \times\cF Y(r_{j+1}-r_j,\cdot)( \xi_1+\dots+\xi_j)J_0(r_1)\times |s_j-r_j|^{2H_0-2} \ud\mathbf{s}\ud\mathbf{r} \boldsymbol\mu(\ud\boldsymbol\xi)\\
       &= n!\lambda^{2n}C^{n}\int_{\R^{n}} \bigg( \int_{T^2_n(t)} \prod_{j=1}^{n}\cF Y(s_{j+1}-s_j,\cdot)(\eta_j)J_0(s_1) \times\cF Y(r_{j+1}-r_j,\cdot)(\eta_j)J_0(r_1)\\
       &\quad\times |s_j-r_j|^{2H_0-2} \ud\mathbf{s}\ud\mathbf{r}\bigg)\prod_{j=1}^{n}|\eta_j-\eta_{j-1}|^{1-2H}  \ud\boldsymbol\eta \\
       &\ge n!\lambda^{2n}C^{n}\int_{\D_n} \bigg( \int_{T^2_n(t)} \prod_{j=1}^{n}\cF Y(s_{j+1}-s_j,\cdot)(\eta_j)J_0(s_1) \times\cF Y(r_{j+1}-r_j,\cdot)(\eta_j)J_0(r_1)\\
       &\quad\times |s_j-r_j|^{2H_0-2} \ud\mathbf{s}\ud\mathbf{r}\bigg)\prod_{j=1}^{n}|\eta_j|^{1-2H}  \ud\boldsymbol\eta,
  \end{align*}
 where in the last inequality we have used the fact that the inner integral with respect to $\ud \mathbf s\ud\mathbf r$ is nonnegative and that $|\eta_j-\eta_{j-1}|^{1-2H}\ge|\eta_j|^{1-2H}$ on $\D_n$ with $\D_n=\{ (\eta_1,\dots,\eta_n)\in\R^n: \eta_1\ge0,\eta_2\le0,\eta_3\ge0,\eta_4\le0,\dots \}$.

  Now, we have
  \begin{align}
    &n!\Vert \tilde{f_n}(\cdot,t,x)\Vert_{\cH^{\otimes n}}^2  \notag\\
    & \ge n!\lambda^{2n}C^{n}\int_{T^2_n(t)} \bigg(\int_{\D_n}\prod_{j=1}^{n}\cF Y(s_{j+1}-s_j,\cdot)(\eta_j) \times\cF Y(r_{j+1}-r_j,\cdot)(\eta_j) \notag\\
    &\quad\times|\eta_j|^{1-2H}  \ud\boldsymbol\eta \bigg)J_0(s_1)J_0(r_1) \prod_{j=1}^{n}|s_j-r_j|^{2H_0-2} \ud\mathbf{s}\ud\mathbf{r} \notag\\
    &= n!\lambda^{2n}C^{n}\int_{T^2_n(t)} \bigg(\prod_{j=1}^{n}\int_{\R_+}\cF Y(s_{j+1}-s_j,\cdot)(\eta) \times\cF Y(r_{j+1}-r_j,\cdot)(\eta) \notag\\
    &\quad\times|\eta|^{1-2H}  \ud\eta \bigg)J_0(s_1)J_0(r_1)\prod_{j=1}^{n}|s_j-r_j|^{2H_0-2} \ud\mathbf{s}\ud\mathbf{r} \notag\\
    &\ge n!t^{n(2H_0-2)}\mu_0^2\lambda^{2n}C^{n} \int_{\R_+^n}\Bigg|\int_{T_n(t)}\prod_{j=1}^{n}\cF Y(s_{j+1}-s_j,\cdot)(\eta_j)\ud\mathbf{s} \Bigg|^2 \prod_{j=1}^{n}|\eta_j|^{1-2H} \ud\boldsymbol\eta, \label{e:fn-low-b}
  \end{align}
  where we have used Lemma \ref{le:integral-space-nonnegative} and the fact that $|s_j-r_j|\le t$, $J_0(\cdot)\ge\mu_0$ in the last inequality.

  Let
  \begin{equation}\label{e:de-an}
   \begin{split}
     A_n(t) :&= \int_{\R_+^n}\Bigg|\int_{T_n(t)}\prod_{j=1}^{n}\cF Y(s_{j+1}-s_j,\cdot)(\eta_j)\ud\mathbf{s} \Bigg|^2 \prod_{j=1}^{n}|\eta_j|^{1-2H} \ud\boldsymbol\eta \\
     &= \int_{\R_+^n}\Bigg|\int_{T_n(t)}\prod_{j=1}^{n} (s_{j+1}-s_j)^{\beta+\gamma-1}E_{\beta,\beta+\gamma}\left(-2^{-1}\nu (s_{j+1}-s_j)^\beta|\eta_j|^\alpha\right) \ud\mathbf{s} \Bigg|^2 \prod_{j=1}^{n}|\eta_j|^{1-2H} \ud\boldsymbol\eta.
   \end{split}
  \end{equation}
  Using  changes of variables, we have the scaling
  \begin{equation}\label{e:scaling-an}
    A_n(t) = t^{n\left(2\beta+2\gamma-\frac{\beta(2-2H)}{\alpha}\right)} A_n(1).
  \end{equation}
  Now, we treat $\E[A_n(\tau)]$ where $\tau$ is an exponential random variable with parameter $1$. By Jensen's inequality, we get
  \begin{align}
     & \E[A_n(\tau)] =\int_{0}^{\infty}e^{-t} A_n(t)\ud t  \notag\\
     &=\int_{\R_+^n}\int_{0}^{\infty}e^{-t} \Bigg|\int_{T_n(t)}\prod_{j=1}^{n} (s_{j+1}-s_j)^{\beta+\gamma-1}E_{\beta,\beta+\gamma}\left(-2^{-1}\nu (s_{j+1}-s_j)^\beta|\eta_j|^\alpha\right) \ud\mathbf{s} \Bigg|^2 \ud t \prod_{j=1}^{n}|\eta_j|^{1-2H} \ud\boldsymbol\eta \notag\\
     &\ge\int_{\R_+^n}\Bigg|\int_{0}^{\infty}e^{-t} \int_{T_n(t)}\prod_{j=1}^{n} (s_{j+1}-s_j)^{\beta+\gamma-1}E_{\beta,\beta+\gamma}\left(-2^{-1}\nu (s_{j+1}-s_j)^\beta|\eta_j|^\alpha\right) \ud\mathbf{s} \ud t \Bigg|^2 \prod_{j=1}^{n}|\eta_j|^{1-2H} \ud\boldsymbol\eta \notag\\
     &\ge\int_{\left(0,(2/\nu)^{1/\alpha}\right)^n}\Bigg|\int_{0}^{\infty}e^{-t} \int_{T_n(t)}\prod_{j=1}^{n} (s_{j+1}-s_j)^{\beta+\gamma-1}E_{\beta,\beta+\gamma}\left(-2^{-1}\nu (s_{j+1}-s_j)^\beta|\eta_j|^\alpha\right) \ud\mathbf{s} \ud t \Bigg|^2  \notag\\
     &\quad \times\prod_{j=1}^{n}|\eta_j|^{1-2H} \ud\boldsymbol\eta. \label{e:jensen-low-b}
  \end{align}
  Using the change of variables $r_j=s_{j+1}-s_j$, for $j=0,\dots,n$ with the convention $s_0=0$ and $s_{n+1}=t$, we have, when $|\eta|<(2/\nu)^{1/\alpha}$,
  \begin{align}
   & \int_{0}^{\infty}e^{-t} \int_{T_n(t)}\prod_{j=1}^{n} (s_{j+1}-s_j)^{\beta+\gamma-1}E_{\beta,\beta+\gamma}\left(-2^{-1}\nu (s_{j+1}-s_j)^\beta|\eta_j|^\alpha\right) \ud\mathbf{s} \ud t \notag\\
   &=\int_{R_+^{n+1}}e^{-(r_0+\cdots+r_n)} \prod_{j=1}^{n} r_j^{\beta+\gamma-1}E_{\beta,\beta+\gamma}\left(-2^{-1}\nu r_j^\beta|\eta_j|^\alpha\right) \ud\mathbf{r} \notag\\
   &=\prod_{j=1}^{n}\frac{1}{1+2^{-1}\nu|\eta_j|^\alpha},\label{e:integral-time-low-b}
  \end{align}
  where we have used Lemma \ref{le:integral-time-low-b} in the last step. Plugging \eqref{e:integral-time-low-b} into \eqref{e:jensen-low-b}, we have
  \begin{equation*}
    \E[A_n(\tau)]\ge\int_{\left[0,(2/\nu)^{1/\alpha}\right]^n} \prod_{j=1}^{n}\frac{|\eta_j|^{1-2H}}{\left(1+2^{-1}\nu|\eta_j|^\alpha\right)^2}\ud\boldsymbol\eta=c^n,
  \end{equation*}
  where $c\in(0,\infty)$. And then, by \eqref{e:scaling-an}, we have
  \begin{equation}\label{e:antau-an1}
    c^n\le\E[A_n(\tau)]\le\E\left[\tau^{n(2\beta+2\gamma-\frac{\beta(2-2H)}{\alpha})}\right] A_n(1).
  \end{equation}
  Combining \eqref{e:fn-low-b}, \eqref{e:de-an}, \eqref{e:scaling-an}, \eqref{e:antau-an1} and using the fact $\E[\tau^x]=\Gamma(x+1)$, one can see that
  \begin{align*}
     & n!\Vert \tilde{f_n}(\cdot,t,x)\Vert_{\cH^{\otimes n}}^2\ge n!t^{n(2H_0-2)}\mu_0^2\lambda^{2n}C^{n} A_n(t) \\
     & = n!\mu_0^2\lambda^{2n}C^{n}t^{n(2H_0-2)} t^{n(2\beta+2\gamma-\frac{\beta(2-2H)}{\alpha})} A_n(1)\\
     & \ge n!\mu_0^2\lambda^{2n}C^{n}t^{n(2H_0-2)} t^{n(2\beta+2\gamma-\frac{\beta(2-2H)}{\alpha})} \E^{-1}\left[\tau^{n(2\beta+2\gamma-\frac{\beta(2-2H)}{\alpha})}\right]\\
     &= n!\mu_0^2\lambda^{2n}C^{n}t^{n(2H_0-2)} t^{n(2\beta+2\gamma-\frac{\beta(2-2H)}{\alpha})} \Gamma\left(n\left[2\beta+2\gamma-\frac{\beta(2-2H)}{\alpha}\right]+1\right)^{-1}\\
     &\ge \mu_0^2C^{n} \frac {\lambda^{2n} t^{n(2\beta+2\gamma-2-\frac{\beta(2-2H)}{\alpha}+2H_0)}}{(n!)^{2\beta+2\gamma-1-\frac{\beta(2-2H)}{\alpha}}}
     = \mu_0^2C^{n} \frac {\lambda^{2n} t^{n(2H_0\theta+2H_0)}}{(n!)^{2H_0\theta+1}},
  \end{align*}
  where the last step is from \eqref{estimate-gamma}. By \eqref{estimate-sum}, we have
  \begin{align*}
       \Vert u(t,x)\Vert_p &\ge \Vert u(t,x)\Vert_2=\left( \sum_{n\ge0} n!\Vert \tilde{f_n}(\cdot,t,x)\Vert_{\cH^{\otimes n}}^2\right)^{1/2} \ge \mu_0 c_1\exp\left(  c_2|\lambda|^{\frac{2}{2H_0\theta+1}}t^\frac{2H_0(\theta+1)}{2H_0\theta+1} \right).
  \end{align*}
  This proves the theorem.
\end{proof}

\begin{remark}\label{re:dl-compare}
  In this remark, we compare the condition \eqref{e:DL} with some known results:
  \begin{enumerate}
    \item For the stochastic heat equation (i.e., $\alpha=2$, $\beta=1$ and $\gamma=0$),  \eqref{e:DL} is equivalent to \[H_0+H>\frac34,\]   which agrees with \cite[Theorem 1.2]{chen2019parabolic}.
    \item For the fractional wave equation (i.e., $\beta=2$ and $\gamma=0$), \eqref{e:DL} is equivalent to \[\alpha>3-4H,\]   which coincides with \cite[Theorem 3.2]{song2020fractional}.
    \item If the noise $\dot{W}$ is a space-time white noise (i.e., $H_0=H=\frac12$),   \eqref{e:DL} is equivalent to
    \begin{equation*}
      \begin{cases}
          2\alpha+\frac{\alpha}{\beta} \min\left(2\gamma-1,0\right)>1, & \mbox{if } \beta\in(0,2), \\
          \alpha\min(1+\gamma,2)>1, & \mbox{if } \beta=2.
      \end{cases}
    \end{equation*}
    which coincides with \cite[Theorem 1.1]{chen2022moments}.
  \end{enumerate}
\end{remark}

 \begin{remark}
In Step 3 of the proof of Theorem \ref{th:DL}, the  estimation of the lower bound  relies heavily on the nonnegativity of the integral $\int_{\R} \cF Y(r,\cdot)(\eta)\cF Y(s,\cdot)(\eta) |\eta|^{1-2H} \ud \eta$. For the case $\beta\in(0,1]$, this non-negativity holds  as a direct consequence of the non-negativity of the Fourier transform $\cF Y$ of the fundamental solution. For the case $\beta=2, \gamma=0$ (i.e., the wave equation),  though $\cF(t,\cdot) (\eta) =\frac{\sin(t|\xi|^{\alpha/2})}{|\xi|^{\alpha/2}}$ is not a non-negative function, one still has the non-negativity of the integral $\int_{\R} \cF Y(r,\cdot)(\eta)\cF Y(s,\cdot)(\eta) |\eta|^{1-2H} \ud \eta$ (see
Lemma \ref{le:integral-space-nonnegative}). \end{remark}

\begin{proposition}
Assume the same conditions as in Theorem \ref{th:DL}.  If $H_0=\frac{1}{2}$, the condition \eqref{e:DL} is also necessary and the lower bound \eqref{low-b} holds for all $\alpha>0$, $\beta\in(0,2]$ and $\gamma\ge0$.
\end{proposition}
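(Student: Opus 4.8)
The plan is to exploit that for $H_0=\tfrac12$ the temporal kernel $\Lambda_{H_0}$ is a Dirac mass, so the double time integral in \eqref{e:norm-fn} collapses to a single one. Since the kernels $f_n$ in \eqref{e:fn} are supported on the time simplex $\{0<s_1<\dots<s_n<t\}$, the cross terms in the symmetrization vanish (two distinct orderings of the time variables overlap in a Lebesgue-null set), so that $n!\,\Vert\tilde f_n(\cdot,t,x)\Vert_{\cH^{\otimes n}}^2=\Vert f_n(\cdot,t,x)\Vert_{\cH^{\otimes n}}^2$, and the substitution $\eta_j=\xi_1+\dots+\xi_j$ gives, with $\eta_0=0$ and $s_{n+1}=t$,
\begin{equation*}
  \E\big[|I_n(f_n(\cdot,t,x))|^2\big]=C^n\lambda^{2n}\int_{T_n(t)}J_0(s_1)^2\Bigg(\int_{\R^n}\prod_{j=1}^{n}\big|\cF Y(s_{j+1}-s_j,\cdot)(\eta_j)\big|^2\prod_{j=1}^n|\eta_j-\eta_{j-1}|^{1-2H}\,\ud\boldsymbol\eta\Bigg)\ud\mathbf s .
\end{equation*}
The decisive point is that the inner integrand is now a product of the \emph{nonnegative} factors $|\cF Y(\cdot,\cdot)(\eta_j)|^2$; no positivity of $\cF Y$ itself is involved, so Lemma~\ref{le:integral-space-nonnegative} — and with it the restriction $\beta\in(0,1]$ or $\beta=2,\gamma=0$ — is not needed.

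For the lower bound \eqref{low-b} under \eqref{e:DL}, I would use $J_0(s_1)\ge\mu_0$ and restrict $\boldsymbol\eta$ to the orthant $\D_n=\{\eta_1\ge 0,\eta_2\le 0,\eta_3\ge 0,\dots\}$ from the proof of Theorem~\ref{th:DL}, on which $|\eta_j-\eta_{j-1}|=|\eta_j|+|\eta_{j-1}|\ge|\eta_j|$ and the spatial integral factorizes. A scaling change of variables in the Mittag-Leffler kernel gives $\int_0^\infty|\cF Y(r,\cdot)(\eta)|^2\eta^{1-2H}\,\ud\eta=\tfrac12\,C_{1-2H,\beta+\gamma}\,r^{\theta}$ with $\theta$ as in \eqref{e:theta} and $C_{1-2H,\beta+\gamma}\in(0,\infty)$, the finiteness being guaranteed by \eqref{e:DL}. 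Hence
\begin{equation*}
  \E\big[|I_n(f_n(\cdot,t,x))|^2\big]\ \ge\ c^n\mu_0^2\lambda^{2n}\int_{T_n(t)}\prod_{j=1}^n(s_{j+1}-s_j)^{\theta}\,\ud\mathbf s\ =\ c^n\mu_0^2\lambda^{2n}\,\frac{\Gamma(\theta+1)^n}{\Gamma(n(\theta+1)+1)}\,t^{n(\theta+1)},
\end{equation*}
by a Dirichlet integral that converges because \eqref{e:DL} implies $\theta>-1$. Summing over $n$, using $\Gamma(n(\theta+1)+1)\le C^n(n!)^{\theta+1}$ and Lemma~\ref{le2}, and noting that $2H_0\theta+1=\theta+1$ and $\tfrac{2H_0(\theta+1)}{2H_0\theta+1}=1$ when $H_0=\tfrac12$, one gets $\Vert u(t,x)\Vert_p\ge\Vert u(t,x)\Vert_2\ge\mu_0 c_1\exp\!\big(c_2|\lambda|^{2/(\theta+1)}t\big)$, which is exactly \eqref{low-b}.

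For the necessity of \eqref{e:DL}, assume $u$ is a mild Skorohod solution, so $\E[|I_2(f_2(\cdot,t,x))|^2]\le\E[|u(t,x)|^2]<\infty$. Restricting $\boldsymbol\eta$ to $\D_2=\{\eta_1\ge 0\ge\eta_2\}$ and using $|\eta_2-\eta_1|^{1-2H}=(|\eta_1|+|\eta_2|)^{1-2H}\ge|\eta_1|^{1-2H}$ (so that $\eta_1$ carries the full exponent $2(1-2H)$), I would obtain
\begin{equation*}
  \E\big[|I_2(f_2(\cdot,t,x))|^2\big]\ \ge\ c\,\mu_0^2\lambda^4\int_{T_2(t)}g_{2(1-2H)}(s_2-s_1)\,g_0(t-s_2)\,\ud s_1\,\ud s_2,\qquad g_a(r):=\int_0^\infty|\cF Y(r,\cdot)(\eta)|^2\eta^{a}\,\ud\eta .
\end{equation*}
By the same scaling, $g_a(r)=\tfrac12\,C_{a,\beta+\gamma}\,r^{\theta_a}$ with $\theta_a:=2(\beta+\gamma-1)-\beta(a+1)/\alpha$ (so that $\theta_{1-2H}=\theta$) and $C_{a,\beta+\gamma}\in(0,\infty]$, and the right-hand side above is finite only if $C_{2(1-2H),\beta+\gamma}<\infty$ \emph{and} $\theta_{2(1-2H)}>-1$. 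I would then check, case by case from the asymptotics in Proposition~\ref{E-asymp} (separating $\beta<2$ from $\beta=2$ and $\gamma>0$ from $\gamma=0$), that for $H_0=\tfrac12$ the conjunction of these two inequalities is \emph{equivalent} to \eqref{e:DL}; hence if \eqref{e:DL} fails, either $C_{2(1-2H),\beta+\gamma}=\infty$ (so $g_{2(1-2H)}\equiv+\infty$) or $\theta_{2(1-2H)}\le-1$, and in either case the displayed lower bound equals $+\infty$, contradicting $\E[|u(t,x)|^2]<\infty$. The main obstacle is precisely this last equivalence — reconciling the analytically natural condition $\{\,C_{2(1-2H),\beta+\gamma}<\infty\text{ and }\theta_{2(1-2H)}>-1\,\}$ with the stated form of \eqref{e:DL} uniformly over all parameter regimes (e.g., for $\gamma=0,\beta<2$, \eqref{e:DL} amounts to $\theta_{2(1-2H)}>-1$, which then subsumes $C_{2(1-2H),\beta+\gamma}<\infty$, whereas for $\gamma\ge\tfrac12,\beta<2$ it amounts to $C_{2(1-2H),\beta+\gamma}<\infty$, i.e.\ $2\alpha>3-4H$) — while the remaining probabilistic and analytic steps are routine given the tools already developed.
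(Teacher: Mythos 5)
Your proposal is correct and follows essentially the same route as the paper: for the lower bound, restricting the spatial variables to the alternating orthant $\D_n$ so that $|\eta_j-\eta_{j-1}|^{1-2H}\ge|\eta_j|^{1-2H}$, factorizing, and invoking the scaling identity of Lemma~\ref{le:a1} together with Lemmas~\ref{le:t} and~\ref{le2} (the key observation being, exactly as you say, that only $|\cF Y|^2\ge 0$ is needed, so Lemma~\ref{le:integral-space-nonnegative} and its restrictions on $(\beta,\gamma)$ become unnecessary); and for necessity, extracting from the second chaos on $\{\eta_1\ge 0\ge \eta_2\}$ the finiteness of the weighted integral with exponent $2-4H$ on $\eta_1$ and matching the resulting pair of conditions to \eqref{e:DL} via the Mittag--Leffler asymptotics of Proposition~\ref{E-asymp}. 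The case-by-case equivalence you defer at the end is precisely the computation the paper carries out, and your sub-case remarks about which of the two conditions is binding are accurate.
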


\begin{proof}
  {\em Step 1. Necessity of condition \eqref{e:DL}:} Assuming $u(t,x)$ is a solution in the sense of Definition \ref{def:solution},  clearly for any $n\geq 1$, we have that
 \begin{align*}
\Vert \tilde{f_n}(\cdot,t,x)\Vert_{\cH^{\otimes n}}^2 \le \E[|u(t,x)|^2]<\infty.
 \end{align*}
In particular,  we have for $n=2$, noting $J_0(\cdot)\ge\mu_0$,
\begin{align}\label{e:integral-f2}
\int_{T_2(t)}\int_{\R^{2}}\prod_{j=1}^{2}|\cF Y(s_{j+1}-s_j,\cdot)(\eta_j)|^2\times |\eta_j-\eta_{j-1}|^{1-2H} \ud\mathbf{s}\ud\boldsymbol \eta\le C \Vert \tilde{f_2}(\cdot,t,x)\Vert_{\cH^{\otimes 2}}^2<\infty,
\end{align}
 for some positive constant $C$.
  We bound the integral from below  by  restricting the spatial integration  on $\{(\eta_1, \eta_2)\in \R_+\times \R_-\}$ where  we have
 \begin{equation}\label{e:eta2-eta1}
   |\eta_2-\eta_1|^{1-2H}\geq|\eta_1|^{1-2H}.
 \end{equation}
Noting  \eqref{e:eta2-eta1} and Fubini's theorem,  \eqref{e:integral-f2} holds only if the following integral with respect to $(s_1, \eta_1)$ is finite:
 \begin{align}
       &\int_0^{s_2}\int_{\R_+}|\cF Y(s_2-s_1,\cdot)(\eta_1)|^2|\eta_1|^{2-4H} \ud s_1\eta_1 \notag \\
      =& \int_0^{s_2}\int_{\R_+} (s_2-s_1)^{2\beta+2\gamma-2}E_{\beta,\beta+\gamma}^2\left(-2^{-1}\nu (s_2-s_1)^\beta|\eta_1|^\alpha\right) |\eta_1|^{2-4H} \ud s_1\eta_1\notag\\
      =& \frac{1}{2}\int_0^{s_2}\int_{\R}(s_2-s_1)^{2\beta+2\gamma-2-\frac{\beta(3-4H)}{\alpha}} \left(2^{-1}\nu\right)^{-\frac{3-4H}{\alpha}} E_{\beta,\beta+\gamma}^2\left(-|\eta|^\alpha\right)|\eta|^{2-4H} \ud s_1\ud\eta <\infty \label{con-ne1},
 \end{align}
 where in the last step we have used the change of variables $\eta_1=\left(2^{-1}\nu (s_2-s_1)^\beta\right)^{-\frac{1}{\alpha}}\eta$.
 Then, \eqref{con-ne1} is equivalent to
 \begin{equation}\label{con-ne2}
   \begin{cases}
     2\beta+2\gamma-2-\frac{\beta(3-4H)}{\alpha}>-1, \vspace{0.2cm}\\
   \displaystyle  \int_{\R}E_{\beta,\beta+\gamma}^2\left(-|\eta|^\alpha\right)|\eta|^{2-4H} \ud\eta <\infty.
   \end{cases}
 \end{equation}
 Notice that $E_{\beta,\beta+\gamma}^2\left(-|\cdot|^\alpha\right)|\cdot|^{2-4H}$ is locally integrable. By proposition \ref{E-asymp}, as $\eta\to\infty$,
   \begin{equation*}
    E_{\beta,\beta+\gamma}\left(-|\eta|^\alpha\right)=
  \begin{cases}
   -\frac{1}{\Gamma(\gamma) |\eta|^\alpha}+O(|\eta|^{-2\alpha}),          & \beta\in(0,2),\vspace{0.2cm} \\
   \frac{\cos\left(\sqrt{|\eta|^\alpha}-\pi(\gamma+1)/2\right)}{|\eta|^{\alpha(1+\gamma)/2}} +  \frac{1}{\Gamma(\gamma)|\eta|^\alpha}+O\left(|\eta|^{-2\alpha}\right) , & \beta=2.
   \end{cases}
  \end{equation*}
 Thus, for $\beta\in(0,2)$, \eqref{con-ne2} is equivalent to
 \begin{align*}
  & \begin{cases}
     2\beta+2\gamma-2-\frac{\beta(3-4H)}{\alpha}>-1, \\
     2\alpha-(2-4H)>1,
   \end{cases} \\
  \Longleftrightarrow  &  \, 2\alpha+\frac{\alpha}{\beta} \min(2\gamma-1,0)>3-4H,\\
  \Longleftrightarrow  &  \, 2\alpha+\frac{\alpha}{\beta} \min\left(2\gamma-1,2\gamma-1+\frac{\beta(1-2H)}{\alpha},0\right)>3-4H.
 \end{align*}
For $\beta=2$,  the second condition in \eqref{con-ne2} is equivalent to, for any $\e>0$,
 \begin{equation*}
   \int_{|\eta|>\e} \frac{\cos^2\left(\sqrt{|\eta|^\alpha}-\pi(\gamma+1)/2\right)}{|\eta|^{\alpha(1+\gamma)-(2-4H)}}\ud\eta<\infty \text{ and }
   \int_{|\eta|>\e} \frac{1}{|\eta|^{2\alpha-(2-4H)}} \ud\eta<\infty.
 \end{equation*}
 Therefore, for $\beta=2$, by \cite[Lemma B.1]{chen2022moments}, \eqref{con-ne2} is equivalent to
  \begin{equation*}
    \begin{cases}
      2+2\gamma-\frac{2(3-4H)}{\alpha}>-1, \\
      \alpha \min(1+\gamma,2)-(2-4H)>1,
    \end{cases} \Longleftrightarrow \alpha\min(1+\gamma,2)>3-4H.
  \end{equation*}

  {\em Step 2. Lower bound.} As for the lower bound, by $J_0\ge\mu_0$ we have
  \begin{align*}
       n!\Vert \tilde{f_n}(\cdot,t,x)\Vert_{\cH^{\otimes n}}^2  \ge& \mu_0^2\lambda^{2n}C^{n}\int_{T_n(t)}\int_{\R^{n}}\prod_{j=1}^{n}|\cF Y(s_{j+1}-s_j,\cdot)(\eta_j)|^2\times |\eta_j-\eta_{j-1}|^{1-2H} \ud\mathbf{s}\ud\boldsymbol \eta \\
       \ge  & \mu_0^2\lambda^{2n}C^{n}\int_{T_n(t)}\int_{\D_n} \prod_{j=1}^{n}|\cF Y(s_{j+1}-s_j,\cdot)(\eta_j)|^2\times |\eta_j|^{1-2H} \ud\mathbf{s}\ud\boldsymbol \eta\\
       = & \mu_0^2\lambda^{2n}C^{n}\int_{T_n(t)}\int_{\R^n}\prod_{j=1}^{n}|\cF Y(s_{j+1}-s_j,\cdot)(\eta_j)|^2\times |\eta_j|^{1-2H} \ud\mathbf{s}\ud\boldsymbol \eta,
  \end{align*}
  where in the second inequality we used the fact that $|\eta_j-\eta_{j-1}|^{1-2H}\ge|\eta_j|^{1-2H}$ on $\D_n$ with $\D_n=\{ (\eta_1,\dots,\eta_n)\in\R^n: \eta_1\ge0,\eta_2\le0,\eta_3\ge0,\eta_4\le0,\dots \}$.

  Using Lemma \ref{le:a1} and Lemma \ref{le:t}, we see that
  \begin{equation*}
       n!\Vert \tilde{f_n}(\cdot,t,x)\Vert_{\cH^{\otimes n}}^2  \ge \mu_0^2 C^n\lambda^{2n}\frac{t^{n(\theta+1)}}{\Gamma(n(\theta+1)+1)},
  \end{equation*}
where $\theta=2\beta+2\gamma-2-\frac{\beta(2-2H)}{\alpha}$ when $H_0=\frac12$. And then by Lemma \ref{le2},
  \begin{align*}
       \Vert u(t,x)\Vert_p &\ge \Vert u(t,x)\Vert_2=\left( \sum_{n\ge0} n!\Vert \tilde{f_n}(\cdot,t,x)\Vert_{\cH^{\otimes n}}^2\right)^{1/2} \\
       &\ge \left( \mu_0^2\sum_{n=0}^{\infty}\frac{ C^n\lambda^{2n}t^{n(\theta+1)}}{(n!)^{\theta+1}} \right)^{1/2}\ge \mu_0 c_1\exp\left(  c_2|\lambda|^{\frac{2}{\theta+1}}t \right).
  \end{align*}
  This proves the Proposition.
\end{proof}

\section{H\"older Continuity}\label{se:holder}

In this section, we study the H\"older continuity of the solution $u(t,x)$ to the FDE \eqref{e:fde}.  We first introduce some notations. For two positive constants $a,b$ and a given subset $D\subseteq[0,\infty)\times \R$, let $C_{a,b}(D)$ denote the set of all {\em (locally) H\"older continuous functions over $D$} of order $(a,b)$, namely, given a function $f\in C_{a,b}(D)$,  for each compact subset $K\subseteq D$, there exists a constant $C$ (which may depend on $K$), such that for all $(t,x),(s,y)\in K$,
\begin{equation*}
  |f(t,x)-f(s,y)|\le C \left( |t-s|^a+|x-y|^b \right).
\end{equation*}
We denote \[C_{a^-,b^-}(D):=\bigcap_{\substack{a'\in(0,a)\\b'\in(0,b)}}C_{a',b'}(D).\]


\begin{theorem}\label{th:holder}
Assume $H_0\in[\frac12,1)$, $H\in(0,1/2)$ and the condition \eqref{e:DL}. Let $u(t,x)$ be the mild Skorohod solution to \eqref{e:fde}.  In the case $\beta=2$, for the H\"older continuity in time, we assume the additional condition
\begin{equation}\label{e:DL-add}
 \alpha\gamma>2-2H.
\end{equation}
 Denote
  \begin{equation*}
    \rho:=\beta+\gamma-1-\frac{\beta(1-H)}{\alpha}+H_0,
  \end{equation*}
  and
  \begin{equation}\label{e:kappa}
    \kappa:=
    \begin{cases}
    \alpha-1+H+\frac{\alpha}{\beta}\min(\gamma-1+H_0,0), \ & \mbox{ if } 0<\beta<2,\\
    \frac{\alpha}{2}\min(1+\gamma, 2)-1+H, \ & \mbox{ if } \beta=2.
    \end{cases}
  \end{equation}
   Then, we have
  \begin{equation*}
    u(t,x)\in  C_{\min(\rho,1)^-,\min(\kappa,1)^-}\big([0,\infty)\times\R \big).
  \end{equation*}
\end{theorem}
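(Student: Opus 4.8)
The plan is to use the chaos expansion $u(t,x)=J_0(t)+\sum_{n\ge1}I_n(f_n(\cdot,t,x))$ together with the Kolmogorov continuity criterion. Since $u(t,x)-u(s,y)=\bigl(J_0(t)-J_0(s)\bigr)+\sum_{n\ge1}I_n\bigl(f_n(\cdot,t,x)-f_n(\cdot,s,y)\bigr)$, and the deterministic term $J_0$ is affine in $t$ (hence locally Lipschitz), it suffices to bound, for $p\ge2$, the $L^p(\Omega)$-norm of the chaos part. By hypercontractivity \eqref{pnorm-bound} this reduces to the $L^2$-norm, so the heart of the matter is the estimate
\[
\sum_{n\ge1} n!\,\bigl\Vert \widetilde{f_n}(\cdot,t,x)-\widetilde{f_n}(\cdot,s,y)\bigr\Vert_{\cH^{\otimes n}}^2 \le C\Bigl(|t-s|^{2\rho\wedge 2}+|x-y|^{2\kappa\wedge 2}\Bigr)e^{c\,t^{\#}}
\]
on compact sets, after which Kolmogorov's theorem gives Hölder exponents $\min(\rho,1)^-$ in time and $\min(\kappa,1)^-$ in space (one uses that $p$ is arbitrarily large to remove the loss in the exponent). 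I would treat the spatial and temporal increments separately.

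\emph{Spatial increment.} Using the Fourier representation \eqref{eqn-Fourier-g}, the difference $\cF\widetilde f_n(\cdot,t,x)-\cF\widetilde f_n(\cdot,t,y)$ carries the factor $e^{-ix(\xi_1+\cdots+\xi_n)}-e^{-iy(\xi_1+\cdots+\xi_n)}$, whose modulus is bounded by $2^{1-b}|x-y|^{b}|\xi_1+\cdots+\xi_n|^{b}$ for any $b\in[0,1]$. Inserting this into the norm formula \eqref{e:norm-fn}, one proceeds exactly as in Step 1 of the proof of Theorem~\ref{th:DL}: bound the extra $|\xi_1+\cdots+\xi_n|^{2b}$ crudely by $\le C^n\sum_{j}|\eta_j|^{2b}$-type terms via \eqref{e:ineq1}--\eqref{e:estimate-eta} (now with a shifted exponent on one coordinate), apply Lemma~\ref{le:a1} to perform the spatial integrals — here the exponent $a_j+2b$ must still satisfy the integrability condition \eqref{e:con-cg-wd}, which is precisely what forces $b<\kappa\wedge 1$ and explains the definition \eqref{e:kappa} of $\kappa$ — and then apply Lemma~\ref{le:t} and Lemma~\ref{le2} to sum in $n$, producing the bound $C\,|x-y|^{2b}\,J_0^2(t)\,e^{c t^{\#}}$.

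\emph{Temporal increment.} Here I would split $f_n(\cdot,t,x)-f_n(\cdot,s,x)$ (taking $s<t$) into two pieces: the contribution of the time-simplex increment $\one_{\{0<s_1<\cdots<s_n<t\}}-\one_{\{0<s_1<\cdots<s_n<s\}}$, and the contribution of the increment of the last kernel factor $\cF Y(t-s_{\rho(n)},\cdot)(\cdot)-\cF Y(s-s_{\rho(n)},\cdot)(\cdot)$, with $J_0(t)-J_0(s)$ handled trivially. For the kernel increment one uses \eqref{e:fourier-Y} and the identity \eqref{e:dmlf}: writing $g(r)=r^{\beta+\gamma-1}E_{\beta,\beta+\gamma}(-2^{-1}\nu r^\beta|\xi|^\alpha)$, one has $g(t-u)-g(s-u)=\int_{s-u}^{t-u}g'(r)\,dr$ with $g'(r)=r^{\beta+\gamma-2}E_{\beta,\beta+\gamma-1}(-2^{-1}\nu r^\beta|\xi|^\alpha)$, and an interpolation/Hölder argument in the time integral extracts the factor $|t-s|^{b_0}$ at the cost of replacing the exponent $\beta+\gamma-1$ by $\beta+\gamma-1-b_0$ in one coordinate; tracking this through Lemmas~\ref{le:a1}, \ref{le:t}, \ref{le2} yields $C|t-s|^{2b_0}e^{ct^{\#}}$ provided $b_0<\rho\wedge 1$, which is where the exponent $\rho$ and — when $\beta=2$ — the extra condition \eqref{e:DL-add} $\alpha\gamma>2-2H$ (needed so that $g'$ remains integrable near $r=0$ in the $\beta=2$ case) enter. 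The simplex-indicator increment is similar but easier, since the extra region of integration $\{s<s_n<t\}$ has length $\le t-s$ and contributes a further power of $|t-s|$.

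\emph{Main obstacle.} The routine-looking but delicate point is the bookkeeping of how much regularity can be extracted: each power $|x-y|^{b}$ or $|t-s|^{b_0}$ costs a shift in a single coordinate's exponent, and one must verify that after this shift the spatial integral still converges (condition \eqref{e:con-cg-wd} of Lemma~\ref{le:a1}) \emph{and} the resulting time-simplex integral still has summable-in-$n$ size (condition \eqref{e:theta'}-type bound for Lemma~\ref{le:t}), uniformly over the $2^{n-1}$ multi-indices in $\cD_n$. Making these constraints sharp is exactly what yields the thresholds $\rho$ and $\kappa$; the $\beta=2$ case is the hard one because the oscillatory asymptotics of $E_{2,b}$ from Proposition~\ref{E-asymp} give slower decay, forcing the extra hypothesis \eqref{e:DL-add} and the $\min(1+\gamma,2)$ truncation in \eqref{e:kappa}. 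Once these estimates are in place, assembling them via Kolmogorov's criterion over an exhausting sequence of compact sets is standard.
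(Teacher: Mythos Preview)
Your outline is the paper's approach: same spatial/temporal split, same reduction via hypercontractivity and the chaos machinery of Theorem~\ref{th:DL}, and the same key idea of differentiating $\cF Y$ in time via \eqref{e:dmlf} for the kernel increment. Two points deserve sharpening.

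First (minor): after the change of variables $\eta_j=\xi_1+\cdots+\xi_j$ the extra spatial factor is simply $|\eta_n|^{2\varepsilon}$, so only the \emph{last} exponent shifts --- there is no ``$C^n\sum_j|\eta_j|^{2b}$-type'' expansion to perform here.

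Second, and this is a genuine gap: the heuristic ``replace $\beta+\gamma-1$ by $\beta+\gamma-1-b_0$ in one coordinate'' does not go through as stated. After \eqref{e:dmlf} and Minkowski one lands (for $a_n=1-2H$) at $\bigl(\int_s^t (v-r_n)^{H_0\theta-1}\,dv\bigr)^2$. When $\rho=H_0(\theta+1)\le1$ --- the interesting regime --- the naive bounds on this integral yield either only $|t-s|^{2H_0\theta}$ (losing $H_0$ relative to the target $\rho$, via subadditivity of $z\mapsto z^{H_0\theta}$) or else a factor $(s-r_n)^{2H_0\theta-2}$ that is too singular for Lemma~\ref{le:t} after the $1/(2H_0)$ power. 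The paper isolates this step as a separate Proposition~\ref{prop:coni} and, via a Cauchy--Schwarz splitting argued in three cases according to the size of $2H_0(\theta+1)$, obtains the correct form $C(s-r_n)^{-\delta}|t-s|^{q}$ with $\delta<2H_0$ and any $q<2\rho$; the constraint $\delta<2H_0$ is exactly what keeps the resulting simplex exponent $-\delta/(2H_0)>-1$. Finally, condition \eqref{e:DL-add} enters not as integrability of $g'$ near $r=0$ but as finiteness of the constant $C_{a,\beta+\gamma-1}$ in Lemma~\ref{le:a1}, i.e.\ as decay of $E_{2,1+\gamma}(-|\xi|^\alpha)$ at $|\xi|\to\infty$.
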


 \begin{remark}[A comparison of Theorem \ref{th:holder} with known results]\label{re:holder-compare}\hfill

 \begin{enumerate}
   \item  When $\alpha=2$, $\beta=1$ and $\gamma=0$, the equation \eqref{e:fde} is the classical heat equation, then
 \begin{equation*}
   u(t,x)\in C_{\left(H_0+\frac{H}{2}-\frac12\right)^-,\left(2H_0+H-1\right)^-}\big([0,\infty)\times\R \big),
 \end{equation*}
   which  coincides with \cite[Theorem 4.3]{hu2019joint}. If in addition the noise $\dot{W}$ is white in time (i.e., $H_0=\frac12$),  we have
   \begin{equation*}
     u(t,x)\in C_{\frac{H}{2}^-,H^-}\big([0,\infty)\times\R \big),
   \end{equation*}
   which is consistent with \cite[Proposition 3.7]{hu2017stochastic}.

   \item If the noise $\dot{W}$ is a space-time white noise (i.e., $H_0=H=\frac12$),  we have for  $\beta\in(0,2)$,
   \begin{equation*}
     u(t,x)\in C_{\min\left(\beta+\gamma-\frac12-\frac{\beta}{2\alpha},1\right)^-,\min\left(\alpha-\frac12+\frac{\alpha}{\beta}\min(\gamma-\frac12,0),1\right)^-}\big([0,\infty)\times\R \big)
   \end{equation*}
   which is consistent with \cite[Theorem 1]{chen2022holder}.

  \item  When $\beta=2$ and $\gamma\geq0$,  the solution $u(t,x)$ has H\"older continuity in space with order
   $\min\left(\frac{\alpha}{2}\min(1+\gamma, 2)-1+H,1\right)^-$, which agrees with \cite[Proposition 5.1]{song2020fractional} where $\gamma=0$; when $\beta=2$ and $\alpha\gamma>2-2H$,  the order of H\"older continuity in time is improved to $\min\left(1+\gamma-\frac{2-2H}{\alpha}+H_0,1\right)^-$. For the H\"older continuity in time, Theorem \ref{th:holder} does not cover the case of  the fractional stochastic wave equation (i.e. $\beta=2$, $\gamma=0$ and $\alpha\in(0,2]$)  which was considered in \cite{song2020fractional}, due to the  extra condition \eqref{e:DL-add} required by  the new method employed in the proof (see also Remark \ref{rem:explanation}).

  \item As observed in \cite{chen2022holder}, the space H\"older continuity exponent may jump down as $\beta \to 2^{-}$. For instance, if $\gamma=0$, we have $\lim\limits_{\beta\to 2^-} \kappa =\frac{\alpha}{2}(1+H_0)-1+H$ which is strictly bigger than the value $\frac\alpha2 -1+H$ of $\kappa$ for $\beta=2$. In contrast, the time H\"older continuity exponent does not jump as $\beta\to2^-$ because of the additional condition \eqref{e:DL-add} for the case $\beta=2$.

 \end{enumerate}
\end{remark}

The following proposition is a key ingredient in the proof of H\"older continuity in time.

\begin{proposition}\label{prop:coni}
 Assume the same conditions as  in Theorem \ref{th:holder}. Fix $T\in(0,\infty)$, $x\in\R$ and recall $\theta$ given in \eqref{e:theta}.    Then,  for $-1<a\leq 1-2H$ and $0<q<2H_0(\theta+1)$, there exist constants $C>0$ and $\delta\in(0,2H_0)$ such that   \begin{equation}\label{e:icf2h}
 \int_{\R} \left|\cF Y(t-r,\cdot)(\xi)-\cF Y(s-r,\cdot)(\xi)\right|^2 |\xi|^a \ud\xi\le
   C (s\wedge t-r)^{-\delta}|t-s|^{q\wedge2}
  \end{equation}
holds for all $s, t\in (0,T]$ and $0\leq r<s\wedge t$.
\end{proposition}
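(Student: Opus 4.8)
The plan is to freeze $r$, reduce the claim to a time‑increment estimate for $\cF Y(\cdot,\cdot)(\xi)$, and then split according to whether the increment is small or large compared with $s\wedge t-r$. Assume without loss of generality that $r<s\le t$ (the case $s=t$ being trivial), and write $\ell:=s-r=s\wedge t-r\in(0,T]$ and $h:=t-s=|t-s|\in(0,T]$, so that $t-r=\ell+h\le T$ and \eqref{e:icf2h} becomes
\[
\mathcal I\ :=\ \int_{\R}\big|\cF Y(\ell+h,\cdot)(\xi)-\cF Y(\ell,\cdot)(\xi)\big|^{2}|\xi|^{a}\,\ud\xi\ \le\ C\,\ell^{-\delta}h^{q\wedge 2}.
\]

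The key ingredients are two pointwise estimates. From \eqref{e:fourier-Y} and \eqref{e:dmlf} (with $n=1$) one has, for $\tau>0$,
\[
\cF Y(\tau,\cdot)(\xi)=\tau^{\beta+\gamma-1}E_{\beta,\beta+\gamma}(-2^{-1}\nu\tau^{\beta}|\xi|^{\alpha}),\qquad
\partial_{\tau}\cF Y(\tau,\cdot)(\xi)=\tau^{\beta+\gamma-2}E_{\beta,\beta+\gamma-1}(-2^{-1}\nu\tau^{\beta}|\xi|^{\alpha}),
\]
and I would use Proposition \ref{E-asymp} together with the fact that $E_{a,b}$ is entire (hence locally bounded) to obtain constants $\nu_{0},\nu_{1}>0$ with
\[
|\cF Y(\tau,\cdot)(\xi)|\le C\tau^{\beta+\gamma-1}(1+\tau^{\beta}|\xi|^{\alpha})^{-\nu_{0}},\qquad
|\partial_{\tau}\cF Y(\tau,\cdot)(\xi)|\le C\tau^{\beta+\gamma-2}(1+\tau^{\beta}|\xi|^{\alpha})^{-\nu_{1}};
\]
one may take $\nu_{0}=\nu_{1}=1$ if $\beta\in(0,2)$, and $\nu_{0}=\min(\tfrac{1+\gamma}{2},1)$, $\nu_{1}=\min(\tfrac{\gamma}{2},1)$ if $\beta=2$, the additional hypothesis \eqref{e:DL-add} guaranteeing $\gamma>0$ and hence $\nu_{1}>0$ in the latter case. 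Substituting $|\xi|=(2^{-1}\nu\tau^{\beta})^{-1/\alpha}|\zeta|$ and writing $m(a):=2\beta+2\gamma-2-\tfrac{\beta(a+1)}{\alpha}$, these yield
\[
\int_{\R}|\cF Y(\tau,\cdot)(\xi)|^{2}|\xi|^{a}\,\ud\xi\le C\tau^{m(a)},\qquad
\int_{\R}|\partial_{\tau}\cF Y(\tau,\cdot)(\xi)|^{2}|\xi|^{a}\,\ud\xi\le C\tau^{m(a)-2},
\]
where finiteness of the normalized integrals $\int_{\R}(1+|\zeta|^{\alpha})^{-2\nu_{i}}|\zeta|^{a}\,\ud\zeta$ holds because \eqref{e:DL} (and, when $\beta=2$, also \eqref{e:DL-add}) forces $a\le 1-2H<2\alpha\nu_{1}-1\le 2\alpha\nu_{0}-1$ — these are exactly the convergence conditions already used in Step 1 of the proof of Theorem \ref{th:DL}.

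Next I would treat the two regimes separately. If $h\le\ell$, I would write $\cF Y(\ell+h,\cdot)(\xi)-\cF Y(\ell,\cdot)(\xi)=\int_{\ell}^{\ell+h}\partial_{\tau}\cF Y(\tau,\cdot)(\xi)\,\ud\tau$, apply the Cauchy--Schwarz inequality in $\tau$, then Fubini and the second scaled bound, and finally the elementary inequality $\int_{\ell}^{\ell+h}\tau^{p}\,\ud\tau\le C_{p}\,h\,\ell^{p}$ (valid for every $p\in\R$ whenever $0<h\le\ell$), to obtain
\[
\mathcal I\ \le\ h\int_{\ell}^{\ell+h}\!\Big(\int_{\R}|\partial_{\tau}\cF Y(\tau,\cdot)(\xi)|^{2}|\xi|^{a}\,\ud\xi\Big)\ud\tau\ \le\ Ch\int_{\ell}^{\ell+h}\!\tau^{m(a)-2}\ud\tau\ \le\ Ch^{2}\ell^{m(a)-2}.
\]
If instead $h>\ell$, I would simply bound $|\cF Y(\ell+h,\cdot)(\xi)-\cF Y(\ell,\cdot)(\xi)|^{2}\le 2|\cF Y(\ell+h,\cdot)(\xi)|^{2}+2|\cF Y(\ell,\cdot)(\xi)|^{2}$ and use the first scaled bound to get $\mathcal I\le C\big(\ell^{m(a)}+(\ell+h)^{m(a)}\big)$. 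In either case, comparing the resulting powers of $h$ and $\ell$ with $\ell^{-\delta}h^{q\wedge 2}$, using $0<\ell,h\le T$ together with $h\le\ell$ resp. $\ell+h<2h$, one checks that the bound $\mathcal I\le C\ell^{-\delta}h^{q\wedge 2}$ reduces in every case to the single requirement $\delta\ge q\wedge 2-m(a)$.

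To finish, I would observe that $m(a)$ is nonincreasing in $a$, so $m(a)\ge m(1-2H)=2H_{0}\theta$ by the definition \eqref{e:theta} of $\theta$; hence $q\wedge 2-m(a)\le q-2H_{0}\theta<2H_{0}(\theta+1)-2H_{0}\theta=2H_{0}$, using the hypothesis $q<2H_{0}(\theta+1)$. Consequently the interval $\big(\max(0,\,q\wedge 2-m(a)),\,2H_{0}\big)$ is nonempty, and any $\delta$ in it (with $C$ adjusted accordingly) proves the proposition. The main obstacle I anticipate is pinning down the correct uniform decay exponents $\nu_{0},\nu_{1}$ — in particular controlling the oscillatory leading term $(-z)^{(1-b)/2}\cos(\sqrt{-z}+\cdots)$ of $E_{2,b}$ when $\beta=2$, and verifying that \eqref{e:DL} and \eqref{e:DL-add} are precisely what is needed for the weighted spatial integrals above to converge; once this is done, the two‑regime bookkeeping is routine.
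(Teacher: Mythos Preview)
Your argument is correct and shares with the paper the crucial observation that $\partial_{\tau}\cF Y(\tau,\cdot)(\xi)=\tau^{\beta+\gamma-2}E_{\beta,\beta+\gamma-1}(-2^{-1}\nu\tau^{\beta}|\xi|^{\alpha})$ via \eqref{e:dmlf}, together with the recognition that when $\beta=2$ the extra condition \eqref{e:DL-add} is precisely what makes the $|\xi|^{a}$-weighted $L^{2}$ integral of this derivative finite. Beyond that shared core, however, your route diverges from the paper's. The paper applies Minkowski's inequality in $L^{2}(|\xi|^{a}\ud\xi)$ directly to $\int_{s}^{t}\partial_{u}\cF Y\,\ud u$, evaluates the inner integral \emph{exactly} by the scaling identity of Lemma~\ref{le:a1} (no pointwise bounds on $E_{\beta,b}$ are needed), arrives at $I=C_{a,\beta+\gamma-1}\big|\int_{s}^{t}(u-r)^{H_{0}\theta-1}\ud u\big|^{2}$, and then treats this one-dimensional time integral by a three-case analysis according to whether $2H_{0}(\theta+1)\le 1$, $\in(1,2]$, or $>2$. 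You instead (i) replace the exact scaling by pointwise envelopes $|E_{\beta,b}(-x)|\le C(1+x)^{-\nu}$ coming from Proposition~\ref{E-asymp}, (ii) perform the standard diagonal/off-diagonal split $h\le\ell$ versus $h>\ell$, and (iii) reduce every case to the single clean requirement $\delta\ge q\wedge 2-m(a)$, which the hypothesis $q<2H_{0}(\theta+1)$ makes compatible with $\delta<2H_{0}$. Your bookkeeping is arguably tidier; the paper's approach avoids the detour through pointwise Mittag--Leffler bounds and gets an identity rather than an inequality at the scaling step. One small imprecision: the convergence conditions you need are not literally those ``used in Step~1 of the proof of Theorem~\ref{th:DL}'' --- there one controls $C_{a_{j},\beta+\gamma}$, whereas here (for the derivative) one needs finiteness of the integral involving $E_{\beta,\beta+\gamma-1}$, which for $\beta=2$ is the strictly stronger requirement $a<\alpha\min(\gamma,2)-1$; you do invoke \eqref{e:DL-add} correctly to cover this, so the substance is fine.
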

\begin{proof}
Without loss of generality, we assume $s<t$. Note that $a$ satisfies
     \begin{equation}\label{e:con-a}
  \begin{cases}
    -1<a\leq 1-2H<2\alpha-1, & \mbox{when } \beta\in(0,2) \\
    -1<a\leq 1-2H<\alpha\min(\gamma,2)-1, & \mbox{when } \beta=2.
  \end{cases}
\end{equation}

Recalling \eqref{e:fourier-Y}, we have
  \begin{align*}
  &\int_{\R} |\cF Y(t-r,\cdot)(\xi)-\cF Y(s-r,\cdot)(\xi)|^2 |\xi|^{a}\ud\xi \notag\\
  =&\int_{\R}\left| (t-r)^{\beta+\gamma-1}E_{\beta,\beta+\gamma}(-2^{-1}\nu(t-r)^\beta|\xi|^\alpha)- (s-r)^{\beta+\gamma-1}E_{\beta,\beta+\gamma}(-2^{-1}\nu(s-r)^\beta|\xi|^\alpha)\right|^2|\xi|^{a}\ud\xi.
  \end{align*}
Then by the fundamental theorem of calculus and \eqref{e:dmlf}, the above equation equals
   \begin{equation}\label{e:FY}
   \begin{aligned}
  & \int_{\R} |\cF Y(t-r,\cdot)(\xi)-\cF Y(s-r,\cdot)(\xi)|^2 |\xi|^{a}\ud\xi\\
  =&\int_{\R}\left|\int_s^t (u-r)^{\beta+\gamma-2}E_{\beta,\beta+\gamma-1}(-2^{-1}\nu(u-r)^\beta|\xi|^\alpha)\ud u\right|^2|\xi|^{a}\ud\xi \\
  \le& \left|\int_s^t \bigg(\int_\R (u-r)^{2\beta+2\gamma-4}E_{\beta,\beta+\gamma-1}^2(-2^{-1}\nu(u-r)^\beta|\xi|^\alpha)|\xi|^{a}\ud\xi\bigg)^\frac12 \ud u\right|^2\\
  =& C_{a, \beta+\gamma-1} \left|\int_s^t (u-r)^{\beta+\gamma-2-\frac{\beta(a+1)}{2\alpha}}\ud u\right|^2 \\
  \le& C_{a, \beta+\gamma-1} T^{\frac{\beta(1-2H-a)}{\alpha}} \left|\int_s^t (u-r)^{H_0\theta-1} \ud u\right|^2
  :=I
  \end{aligned}
  \end{equation}
  where the inequality follows from the Minkowski’s inequality, $\theta$ is given in \eqref{e:theta}, and $C_{a, \beta+\gamma-1}$ is finite due to \eqref{e:con-a} and Lemma \ref{le:a1}.

The condition \eqref{e:DL} guarantees $2H_0\theta+1>0$ (see Remark \ref{rem:theta}) and thus  $2H_0(\theta+1)>0$.   We consider the proof in the following three cases.

{\em Case 1: $0<2H_0(\theta+1)\leq1$.} For $0<q<2H_0(\theta+1)\le1$, there exist $\sigma\in(0,\frac12)$ and $\delta\in(0,2H_0)$ satisfying that $2\sigma+\delta$ is close to $2H_0+1$ such that $2\sigma+\delta+2H_0\theta-q-1>0$. Then, the Cauchy-Schwartz inequality yields
\begin{align*}
  I&\le \int_s^t (u-r)^{2H_0\theta-2+2\sigma} \ud u \times \int_s^t (u-s)^{-2\sigma}\ud u\\
   &\le CT^{1-2\sigma}\int_s^t (u-r)^{2H_0\theta-2+2\sigma} \ud u
\end{align*}
 Then, we have
\begin{align*}
  I & \le C \int_s^t (u-r)^{(2\sigma+\delta+2H_0\theta-q-1)-\delta+(q-1)} \ud u \le CT^{2\sigma+\delta+2H_0\theta-q-1}(s-r)^{-\delta}\int_{s}^{t} (u-r)^{q-1} \ud u \\
   &\le C(s-r)^{-\delta}\int_{s}^{t} (u-s)^{q-1} \ud u = C(s-r)^{-\delta} |t-s|^q,
\end{align*}
where the last inequality is due to $q-1\le0$ .

{\em Case 2: $1<2H_0(\theta+1)\le2$.} For $q\in(1,2H_0(\theta+1))$, there exists $\delta\in (0,2H_0)$, such that $2H_0\theta-q+\delta>0$. By the Cauchy-Schwartz inequality
\begin{align*}
  I & \le C |t-s| \int_s^t (u-r)^{(2H_0\theta-q+\delta)-\delta+(q-2)} \ud u \le C T^{2H_0\theta-q+\delta} (s-r)^{-\delta} |t-s| \int_s^t (u-r)^{q-2} \ud u\\
   &\le C (s-r)^{-\delta} |t-s| \int_s^t (u-s)^{q-2} \ud u= C (s-r)^{-\delta} |t-s|^{q}.
\end{align*}

{\em Case 3: $2H_0(1+\theta)>2$.} There exists $\delta\in (0,2H_0)$ such that $2H_0\theta-2+\delta>0$. Then, it is easy to see that
\begin{align*}
  I & \le C|t-s|\int_{s}^{t}(u-r)^{2H_0\theta-2+\delta-\delta}\ud u \le C|t-s|(s-r)^{-\delta} T^{2H_0\theta-2+\delta}\int_s^t 1 \ud u\\
    & \le C|t-s|(s-r)^{-\delta} T^{2H_0\theta-2+\delta}|t-s|= C(s-r)^{-\delta}|t-s|^2.
\end{align*}
This completes the proof.
\end{proof}

Now, we are ready to prove Theorem \ref{th:holder}.

\begin{proof}[Proof of Theorem \ref{th:holder}]

Let $T\in(0,\infty)$ be fixed and $\mathbb K$ be a compact subset of $\R$.  It suffices to show the H\"older continuity in $[0, T]\times \mathbb K$. In the proof of this theorem, the letter $C$ denotes a generic constant independent of $s$, $t$, $x$ and $y$ but maybe depend on $T,\mathbb{K}$, and $C$ might vary from line to line.

{\em Step 1: Increments in space.} For any $(t,x,y)\in [0,T]\times \mathbb{K}^2$, by Minkowski's inequality, \eqref{pnorm-bound},  similar to the calculations in \eqref{e:est-f-n} in  Section \ref{se:ex-un}, we have
\begin{align}\label{eqn-space-I0}
  \Vert u(t,x)-u(t,y)\Vert_p &\le \sum_{n\geq0}\Vert I_n(\tilde{f_n}(\cdot,t,x))-I_n(\tilde{f_n}(\cdot,t,y))\Vert_p  \notag\\
   &\le \sum_{n\geq0}(p-1)^{n/2} \Vert I_n(\tilde{f_n}(\cdot,t,x))-I_n(\tilde{f_n}(\cdot,t,y))\Vert_2,
  \end{align}
  and
  \begin{align}\label{eqn-space-I}
  &\Vert I_n(\tilde{f_n}(\cdot,t,x))-I_n(\tilde{f_n}(\cdot,t,y))\Vert_2^2\notag\\
  =&n!\Vert \tilde{f_n}(\cdot,t,x)-\tilde{f_n}(\cdot,t,y)\Vert_{\mathcal{H}^{\otimes n}}^2\notag\\
  \leq &(n!)^{2H_0-1}J_0^2(T)C^n\lambda^{2n}\bigg(\int_{T_n(t)}\bigg(\int_{\R^n}\prod_{j=1}^{n}|\cF Y(s_{j+1}-s_j,\cdot)(\xi_1+\xi_2+\dots+\xi_j)|^2 |\xi_j|^{1-2H}\notag \\
       & \qquad \qquad \qquad  \qquad \quad \times|e^{-i(\xi_1+\dots+\xi_n)(x-y)}-1|^2 \ud\boldsymbol\xi\bigg)^{\frac{1}{2H_0}} \ud\mathbf{s} \bigg)^{2H_0}\notag\\
 \leq &(n!)^{2H_0-1}J_0^2(T)C^n\lambda^{2n}       \bigg(\int_{T_n(t)}\bigg(\int_{\R^n}\prod_{j=1}^{n}|\cF Y(s_{j+1}-s_j,\cdot)(\eta_j)|^2 |\eta_j-\eta_{j-1}|^{1-2H}\notag \\
       & \qquad \qquad \qquad  \qquad \quad  \times|e^{-i\eta_n(x-y)}-1|^2 \ud\boldsymbol\eta\bigg)^{\frac{1}{2H_0}} \ud\mathbf{s} \bigg)^{2H_0}.
  \end{align}

Using   the following fact: there exists a constant $c$ such that, for any $x\in\R$ and $\varepsilon\in(0,1]$,
 \[
 |1-e^{ix}|^2=2-2\cos x\leq c|x|^{2\varepsilon},
 \]
 we can continue the estimate in \eqref{eqn-space-I} as follows:
 \begin{align}\label{eqn-space-I-2}
  &\Vert I_n(\tilde{f_n}(\cdot,t,x))-I_n(\tilde{f_n}(\cdot,t,y))\Vert_2^2\notag\\
  \leq & (n!)^{2H_0-1}J_0^2(T)C^n\lambda^{2n}\notag\\
  &\times\bigg(\int_{T_n(t)}\bigg(\int_{\R^n}|x-y|^{2\varepsilon}|\eta_n|^{2\varepsilon}\prod_{j=1}^{n}|\cF Y(s_{j+1}-s_j,\cdot)(\eta_j)|^2 |\eta_j-\eta_{j-1}|^{1-2H} \ud\boldsymbol\eta\bigg)^{\frac{1}{2H_0}} \ud\mathbf{s} \bigg)^{2H_0}\notag\\
 \leq & (n!)^{2H_0-1}J_0^2(T)C^n\lambda^{2n}\notag\\
  &  \times \bigg(\int_{T_n(t)}\sum_{a\in \cD_n} \bigg(\int_{\R^n} |x-y|^{2\varepsilon}|\eta_n|^{2\varepsilon}\prod_{j=1}^{n} |\cF Y(s_{j+1}-s_j,\cdot)(\eta_j)|^2 \times |\eta_j|^{a_j} \ud \boldsymbol\eta\bigg)^\frac{1}{2H_0} \ud\mathbf s\bigg)^{2H_0}\notag\\
   = & (n!)^{2H_0-1}J_0^2(T)C^n\lambda^{2n} |x-y|^{2\varepsilon}\notag\\
  &  \times \bigg(\int_{T_n(t)}\sum_{a\in \cD_n} \bigg(\int_{\R^n}|\eta_n|^{2\varepsilon}\prod_{j=1}^{n} |\cF Y(s_{j+1}-s_j,\cdot)(\eta_j)|^2 \times |\eta_j|^{a_j} \ud \boldsymbol\eta\bigg)^\frac{1}{2H_0} \ud\mathbf s\bigg)^{2H_0}.
  \end{align}

Using similar  calculations in \eqref{e:est-f-n}-\eqref{e:time} in Section \ref{se:ex-un}, if we also choose $\varepsilon$ to satisfy the following condition
\begin{equation}\label{delta-1}
  \begin{cases}
    \varepsilon<\alpha-1+H+\frac{\alpha}{\beta}\min(\gamma-1+H_0,0), & \mbox{if } \beta\in(0,2), \\
    \varepsilon<\frac{\alpha}{2}\min(\gamma+1,2)-1+H, & \mbox{if } \beta=2,
  \end{cases}
\end{equation}
 we may apply Lemmas \ref{le:a1}, \ref{le:t} and \ref{le2} to obtain, for $n\ge1$,
 \begin{align}\label{eqn-space-I-2-1}
 & \Vert I_n(\tilde{f_n}(\cdot,t,x))-I_n(\tilde{f_n}(\cdot,t,y))\Vert_2^2
  \notag\\
  \leq & (n!)^{2H_0-1} J_0^2(T) C^n\lambda^{2n} |x-y|^{2\varepsilon}\left(\frac{t^{n(\theta+1)-\frac{\beta\varepsilon}{H_0\alpha}}}{\Gamma\left(n(\theta+1)+1-\frac{\beta\varepsilon}{H_0\alpha}\right)}\right)^{2H_0}\notag\\
    \leq & (n!)^{2H_0-1} J_0^2(T) C^n\lambda^{2n} |x-y|^{2\varepsilon}\left(\frac{T^{n(\theta+1)-\frac{\beta\varepsilon}{H_0\alpha}}}{\Gamma\left(n(\theta+1)+1-\frac{\beta\varepsilon}{H_0\alpha}\right)}\right)^{2H_0}\notag\\
 \leq  &\frac{  J_0^2(T) C^n\lambda^{2n} T^{-\frac{2\beta\varepsilon}{\alpha}}|x-y|^{2\varepsilon}T^{2H_0n(\theta+1)}}{(n!)^{2H_0\theta+1}},
\end{align}
 where we recall $\theta$ is given in \eqref{e:theta}  for $\beta\in(0,2]$.

  Noting that $2H_0\theta+1>0$ by Remark \ref{rem:theta} and combining \eqref{eqn-space-I0}, \eqref{eqn-space-I-2-1} and using Lemma \ref{le2}, we have
 \begin{align*}
   \Vert u(t,x)-u(t,y)\Vert_p\leq C |x-y|^{\varepsilon} \exp\left(C T^{\frac{2H_0(\theta+1)}{2H_0\theta+1}}\right)\le C|x-y|^{\varepsilon}.
      \end{align*}

  By the choice of $\varepsilon$ we know that $\varepsilon<\kappa$ and $\varepsilon\le1$.  The H\"older continuity of $u(t,x)$ in space now follows from Kolmogorov’s continuity criterion.

  {\em Step2. Increments in time.}  For any $(s,t,x)\in [0,T]^2\times \mathbb{K}$,  without loss of generality, we may assume $s<t$.  Similar to \eqref{eqn-space-I0}, by using triangular inequality we obtain
  \begin{equation}\label{e:holdtimesplit}
    \begin{split}
       \Vert u(t,x)-u(s,x)\Vert_p &\leq \mu_1|t-s|+\sum_{n\geq1}(p-1)^{n/2}\left[B_n(s,t)+C_n(s,t)\right]
    \end{split}
  \end{equation}
  where
  \begin{equation*}
  \begin{split}
    B_n(s,t)&=\left\Vert I_n(\tilde{f_n}(\cdot,t,x)\one_{[0,s]^n})-I_n(\tilde{f_n}(\cdot,s,x))\right\Vert_2,\\
    C_n(s,t)&=  \left\Vert I_n(\tilde{f_n}(\cdot,t,x)\one_{[0,t]^n\setminus [0,s]^n})\right\Vert_2.
    \end{split}
  \end{equation*}

First, we assume $s>0$. Now we estimate $B_n(t,s)$, which essentially follows the first part of the proof of Theorem \ref{th:DL}. For $n\ge1$,
\begin{align}
 B_n^2(s,t)\leq & (n!)^{2H_0-1} J_0^2(T) C^n \lambda^{2n}\bigg[\int_{T_n(s)}\bigg(\int_{\R^n}  \prod_{j=1}^{n-1}|\cF Y(r_{j+1}-r_j,\cdot)(\eta_j)|^2 \prod_{j=1}^{n}|\eta_j-\eta_{j-1}|^{1-2H} \notag \\
   & \qquad \qquad \qquad\qquad \qquad\qquad \times  |\cF Y(t-r_n,\cdot)-\cF Y(s-r_n,\cdot)|^2 \ud\boldsymbol\eta\bigg)^{\frac{1}{2H_0}}\ud\mathbf{r}\bigg]^{2H_0}\notag\\
   \leq &(n!)^{2H_0-1}J_0^2(T) C^n \lambda^{2n} \bigg[\sum_{a\in \mathcal D_n}\int_{T_n(s)}\bigg(\int_{\R^n} \prod_{j=1}^{n-1}|\cF Y(r_{j+1}-r_j,\cdot)(\eta_j)|^2 \prod_{j=1}^{n-1} |\eta_j|^{a_j}\notag \\
      & \qquad \qquad \qquad\qquad \qquad\qquad \times |\cF Y(t-r_n,\cdot)-\cF Y(s-r_n,\cdot)|^2 |\eta|^{a_n}\ud\boldsymbol\eta\bigg)^{\frac{1}{2H_0}}\ud\mathbf{r}\bigg]^{2H_0}\notag\\
      \le& (n!)^{2H_0-1}J_0^2(T) C^n \lambda^{2n} \bigg[\sum_{a\in \mathcal D_n} \int_{T_{n}(s)}\bigg(\int_{\R^{n-1}} \prod_{j=1}^{n-1}|\cF Y(r_{j+1}-r_j,\cdot)(\eta_j)|^2 \prod_{j=1}^{n-1} |\eta_j|^{a_j}\ud\boldsymbol\eta \notag\\
      & \qquad \qquad\qquad\qquad \qquad\qquad\times \int_{\R} |\cF Y(t-r_n,\cdot)-\cF Y(s-r_n,\cdot)|^2 |\eta|^{a_n}\ud\eta\bigg)^{\frac{1}{2H_0}}\ud\mathbf{r}\bigg]^{2H_0}.\label{eqn-B-n-0}
\end{align}

 Recalling \eqref{e:a}, we have $a_n\in\{0, 1-2H\}$ for $n\ge 1$, and it follows from Proposition~\ref{prop:coni}  that  there exist constants $C>0$ and $\delta\in(0,2H_0)$ such that for $0<r_n<s<t\leq T$,
  \begin{align}\label{eqn-B-n}
  &\int_{\R} |\cF Y(t-r_n,\cdot)-\cF Y(s-r_n,\cdot)|^2 |\eta|^{a_n}\ud\eta\leq C(s-r_n)^{-\delta}(t-s)^{q\wedge 2}.
  \end{align}

Denote
\[
\epsilon_n=\frac{1}{2H_0}\left(2\beta+2\gamma-2-\frac{\beta(a_n+1)}{\alpha}\right).
\]
For $n=1$, noting that $a_1=1-2H$ and  $\epsilon_1=\theta$,  and thus $n(\theta+1)-\delta/2H_0-\epsilon_n=\theta+1-\delta/2H_0-\epsilon_1>0$. For $n\ge 2$, noting that $\theta+1>0$ by Remark \eqref{rem:theta}, we have
\begin{align*}
&n(\theta+1)-\delta/2H_0-\epsilon_n\ge 2(\theta+1)-\delta/2H_0-\epsilon_n> 2\theta-\epsilon_n+1 \\
&\ge \frac1{2H_0}\left(2\beta+2\gamma-2-\frac{\beta}{\alpha}(3-4H)\right)+1>0
\end{align*}
where the last step follows from  \eqref{e:theta'}. Then, combining \eqref{eqn-B-n-0}-\eqref{eqn-B-n} and using the similar argument as in Section \ref{se:ex-un}, we can obtain
\begin{align*}
 B_n^2(s,t)\leq &(t-s)^{q\wedge 2}(n!)^{2H_0-1}J_0^2(T) C^n \lambda^{2n}\left(\sum_{a\in \mathcal D_n}\frac{t^{n(\theta+1)-\delta/2H_0-\epsilon_n}}{\Gamma\left(n(\theta+1)+1-\delta/2H_0-\epsilon_n\right)}\right)^{2H_0}\notag\\
 \leq &(t-s)^{q\wedge 2}\ \frac{C^n \lambda^{2n}T^{-\delta-2H_0\epsilon_n}T^{2H_0n(\theta+1)}}{(n!)^{2H_0\theta+1}}.
\end{align*}
Then, it follows from \eqref{estimate-sum} that
\begin{align}\label{Bn-est}
\sum_{n\geq 1}(p-1)^{n/2}B_n(s,t)&\leq (t-s)^{\frac q2\wedge 1}C_1\exp\bigg(C_2 T^{\frac{2H_0(\theta+1)}{2H_0\theta+1}}\bigg)\notag\\
&\leq C (t-s)^{\frac q2\wedge 1}.
\end{align}

Next, we treat $C_n(s,t)$. Let $M_{s,t}:=[0,t]^n\setminus[0,s]^n$. Then,
  \begin{equation*}
    M_{s,t}=\bigcup_{\sigma\in S_n}\{(r_1,\dots,r_n):0\leq r_{\sigma(1)}\leq r_{\sigma(2)}\leq\dots\leq r_{\sigma(n)},s\leq r_{\sigma(n)}\leq t\}.
  \end{equation*}
  It follows from the analogous calculations in Section \ref{se:ex-un} that for $n\ge1$,
  \begin{align*}
    C^2_n(s,t) &\leq (n!)^{2H_0-1} J_0^2(T) C^n \lambda^{2n} \\
    &\times\bigg(\sum_{a\in \mathcal D_n}\int_{s}^{t}\ud r_n  \int_{T_{n-1}(r_n)}\bigg( \int_{\R^n} \prod_{j=1}^{n}|\cF Y(r_{j+1}-r_j,\cdot)(\eta_j)|^2 |\eta_j|^{a_j} \ud\boldsymbol\eta\bigg)^{\frac{1}{2H_0}}\ud\mathbf{r}\bigg)^{2H_0} \\
    &\le (n!)^{2H_0-1} J_0^2(T)C^n \lambda^{2n}\bigg(\sum_{a\in \mathcal D_n}\int_s^t (t-r_n)^{\epsilon_n}\frac{r_n^{n(\theta+1)-\epsilon_n-1}}{\Gamma\left(n(\theta+1)-\epsilon_n\right)}\ud r_n\bigg)^{2H_0} \\
    &=(n!)^{2H_0-1} J_0^2(T)C^n \lambda^{2n}\bigg(\sum_{a\in \mathcal D_n}\int_s^t (t-r_n)^{\epsilon_n-\theta} (t-r_n)^{\theta}\frac{r_n^{n(\theta+1)-\epsilon_n-1}}{\Gamma\left(n(\theta+1)-\epsilon_n\right)}\ud r_n\bigg)^{2H_0}\notag\\
   &\leq \frac{J_0^2(T)C^n \lambda^{2n}T^{2H_0n(\theta+1)-2H_0\theta-2H_0} }{(n!)^{2H_0\theta+1}}\left(\int_s^t(t-r_n)^{\theta} \ud r_n\right)^{2H_0} \\
    &= \frac{J_0^2(T)C^n \lambda^{2n}T^{2H_0n(\theta+1)-2H_0\theta-2H_0}}{(n!)^{2H_0\theta+1}} (t-s)^{2H_0(1+\theta)}
  \end{align*}
    where we use the convention $r_{n+1}=t$ and notice that $\epsilon_n-\theta\geq 0$ for $n\ge1$.
 Then, we have
  \begin{align}\label{est-Cn}
\sum_{n\geq 1}(p-1)^{n/2}C_n(s,t)&\leq C(t-s)^\frac{2H_0(1+\theta)}{2}.
     \end{align}

When $s=0$, $B_n(0,t)=0$. Using similar argument, we have, for $n\ge1$,
\begin{equation*}
  C_n^2(0,t)\leq \frac{J_0^2(T)C^n \lambda^{2n}t^{n(2H_0(1+\theta))}}{(n!)^{2H_0\theta+1}} \leq \frac{J_0^2(T)C^n \lambda^{2n}T^{(n-1)(2H_0(1+\theta))}}{(n!)^{2H_0\theta+1}} t^{2H_0(1+\theta)}.
\end{equation*}
Then by Lemma \ref{le2},
\begin{equation}\label{est-Cn2}
  \sum_{n\geq 1}(p-1)^{n/2}C_n(0,t)\leq Ct^\frac{2H_0(1+\theta)}{2}.
\end{equation}

     Using \eqref{e:holdtimesplit}, \eqref{Bn-est}, \eqref{est-Cn} and \eqref{est-Cn2},
we get
\[
\Vert u(t,x)-u(s,x)\Vert_p\leq C|t-s|^{\frac{2H_0(1+\theta)}{2}\wedge 1}, \ \forall (s, t,x)\in [0,T]^2\times \mathbb{K}.
\]
Thus, the H\"older continuity in time follows from Kolmogorov’s continuity criterion.
  The proof of Theorem \ref{th:holder} is concluded.
\end{proof}

\begin{remark} [A comparison of methods dealing with H\"older continuity]\label{rem:explanation}
The approach dealing with the H\"older continuity of the solution to classical SHEs and SWEs (see e.g. \cite{balan2019holder,balan2017hyperbolic,hu2015stochastic,song2017class,song2020fractional}) heavily relies on specific properties of the Fourier transforms of the fundamental solutions of the heat and wave equations, i.e., $\frac{\sin{t|\xi|}}{|\xi|}$ and $e^{-t|\xi|^2/2}$. For general FDEs \eqref{e:fde}, the  Fourier transforms of the fundamental solutions are Mittag-Leffler functions (see \eqref{e:fourier-Y} and \eqref{e:mlf})  which are more involved.  For the case $\beta\in(0,1]$,  the following  {\em complete monotone property}
  \begin{equation*}
    x\in[0,\infty)\mapsto E_{a,b}(-x) \text{ is complete monotone } \Longleftrightarrow 0<a\le1\wedge b,
  \end{equation*}
holds and was used to prove the H\"oder continuity in \cite{chen2019nonlinear}, and for the case $\beta\in(0,2)$,  the techniques of {\em local fractional derivative} and {\em fractional Taylor expansion} was employed in \cite{chen2022holder}.

Our result of Theorem \ref{th:holder} holds for FDE \eqref{e:fde} with $\beta\in(0,2]$ and Proposition~\ref{prop:coni} plays an essential role in the proof. The key in the proof of Proposition \ref{prop:coni} is the utilisation of   \eqref{e:dmlf} in estimating $\int_{\R} |\cF Y(t-r,\cdot)(\xi)-\cF Y(s-r,\cdot)(\xi)|^2 |\xi|^{a}\ud\xi$ (see \eqref{e:FY}), which enables us to cover a wider class of
FDEs  with simpler calculations in comparison with \cite{ chen2022holder,chen2019nonlinear}.   However, this method fails in analysing  the H\"oder continuity for wave equation (i.e., $\beta=2$ and $\gamma=0$) which was studied in \cite{song2020fractional} based on the specific property of $\frac{\sin(t|\xi|^{\alpha/2})}{|\xi|^{\alpha/2}}$ . For instance, applying \eqref {e:FY} for the case $\beta=2, \gamma=0$, we have
  \begin{equation*}
    \begin{split}
       &\int_{\R} |\cF Y(t-r,\cdot)(\xi)-\cF Y(s-r,\cdot)(\xi)|^2 |\xi|^{a}\ud\xi \\
      =&\frac2\nu\int_{\R} \left|\sin\left(\sqrt{\nu/2}(t-r)|\xi|^{\alpha/2}\right)-\sin\left(\sqrt{\nu/2}(s-r)|\xi|^{\alpha/2}\right)\right|^2 |\xi|^{a-\alpha}\ud\xi\\
      =&\int_{\R} \left|\int_s^t\cos\left(\sqrt{\nu/2}(u-r)|\xi|^{\alpha/2}\right)\ud u\right|^2 |\xi|^{a}\ud\xi\\
      \le&   \left|\int_s^t\left(\int_{\R}\cos^2\left(\sqrt{\nu/2}(u-r)|\xi|^{\alpha/2}\right)|\xi|^{a}\ud\xi\right)^\frac{1}{2}\ud u\right|^2.
    \end{split}
  \end{equation*}
Note that $\int_{\R}\cos^2\left(\sqrt{\nu/2}(u-r)|\xi|^{\alpha/2}\right)|\xi|^{a}\ud\xi=\infty$ for all $a\in\R$ and this makes the above estimation invalid.
\end{remark}

\appendix
\section{Some miscellaneous results}\label{ap:lemma}

The following result is borrowed from \cite[Lemma B.3 ]{balan2016intermittency} (see also \cite{mmv01} for the one-dimensional version).
\begin{lemma}\label{le:B.3}
  For any $\varphi\in L^{1/H}(\R^n)$,
  \begin{equation*}
    \int_{\R^n}\int_{\R^n}\varphi(\mathbf t)\varphi(\mathbf{s})\prod_{i=1}^{n}|t_i-s_i|^{2H-2}\ud \mathbf{t}\ud\mathbf{s}\le C_H^n \left( \int_{\R^n}\left|\varphi(\mathbf{t})\right|^{1/H}\ud\mathbf{t} \right)^{2H},
  \end{equation*}
where $C_H>0$ is a constant depending on $H$, and we denote $\mathbf{t}=(t_1,\dots,t_n)$ and $\mathbf{s}=(s_1,\dots,s_n)$.
\end{lemma}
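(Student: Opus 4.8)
The plan is to prove this multi-parameter Hardy--Littlewood--Sobolev inequality by induction on the dimension $n$, with the classical one-dimensional inequality as the base case (throughout, $H\in(1/2,1)$, so that $2H-2\in(-1,0)$, $|u|^{2H-2}$ is locally integrable, and the statement is meaningful). Since the kernel $\prod_{i=1}^n|t_i-s_i|^{2H-2}$ is nonnegative, I would first replace $\varphi$ by $|\varphi|$, which only enlarges the left-hand side and leaves the right-hand side unchanged; so from now on $\varphi\ge0$. Write $Q_m(f):=\int_{\R^m}\int_{\R^m}f(\mathbf t)f(\mathbf s)\prod_{i=1}^m|t_i-s_i|^{2H-2}\,\ud\mathbf t\,\ud\mathbf s$ for $f\ge0$; the goal is $Q_n(\varphi)\le C_H^n\|\varphi\|_{L^{1/H}(\R^n)}^2$.

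For $n=1$ I would simply invoke the classical Hardy--Littlewood--Sobolev inequality (see \cite{mmv01}): writing $|t-s|^{2H-2}=|t-s|^{-(2-2H)}$ with exponent $\lambda:=2-2H\in(0,1)$ and noting that $\tfrac1p+\tfrac1q+\lambda=2$ when $p=q=1/H$, it gives $Q_1(f)\le C_H\|f\|_{L^{1/H}(\R)}^2$.

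For the inductive step I would split off the last coordinate, $\mathbf t=(\tilde{\mathbf t},t_n)$ with $\tilde{\mathbf t}\in\R^{n-1}$, and likewise $\mathbf s$, and integrate in $(\tilde{\mathbf t},\tilde{\mathbf s})$ first. The key structural fact is that the $(n-1)$-variable kernel $\prod_{i=1}^{n-1}|t_i-s_i|^{2H-2}$ is positive semidefinite: it is a tensor product of the one-dimensional kernels $|u|^{2H-2}$, each positive definite because $\int_\R\int_\R h(t)\overline{h(s)}|t-s|^{2H-2}\,\ud t\,\ud s=c_H\int_\R|\widehat h(\xi)|^2|\xi|^{1-2H}\,\ud\xi\ge0$ by Plancherel. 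Hence the bilinear form $(f,g)\mapsto\int\!\int f(\tilde{\mathbf t})g(\tilde{\mathbf s})\prod_{i=1}^{n-1}|t_i-s_i|^{2H-2}\,\ud\tilde{\mathbf t}\,\ud\tilde{\mathbf s}$ satisfies the Cauchy--Schwarz inequality, so it is bounded by $Q_{n-1}(f)^{1/2}Q_{n-1}(g)^{1/2}$; applying the induction hypothesis $Q_{n-1}(h)\le C_H^{n-1}\|h\|_{L^{1/H}(\R^{n-1})}^2$ to $h=\varphi(\cdot,t_n)$ and $h=\varphi(\cdot,s_n)$ (which lie in $L^{1/H}(\R^{n-1})$ for a.e.\ $t_n,s_n$ by Fubini) gives, with $g(t):=\|\varphi(\cdot,t)\|_{L^{1/H}(\R^{n-1})}$,
\[
Q_n(\varphi)\;\le\;C_H^{n-1}\int_\R\int_\R|t_n-s_n|^{2H-2}\,g(t_n)\,g(s_n)\,\ud t_n\,\ud s_n\;=\;C_H^{n-1}\,Q_1(g).
\]
The base case then yields $Q_n(\varphi)\le C_H^n\|g\|_{L^{1/H}(\R)}^2$, and since the exponent $1/H$ is the same in every coordinate one has $\|g\|_{L^{1/H}(\R)}^{1/H}=\int_\R\int_{\R^{n-1}}|\varphi(\tilde{\mathbf t},t)|^{1/H}\,\ud\tilde{\mathbf t}\,\ud t=\|\varphi\|_{L^{1/H}(\R^n)}^{1/H}$, which closes the induction with the geometric constant $C_H^n$.

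The only genuinely nontrivial input is the one-dimensional Hardy--Littlewood--Sobolev inequality; everything after that is bookkeeping. I expect the main (minor) obstacle to be making the positive semidefiniteness of the product kernel rigorous: the Plancherel identity above is formal as stated and would be justified by a routine truncation or mollification of the functions involved, so that all integrals converge absolutely. The only other point worth care is that the constant stays geometric in $n$, which the induction delivers automatically by reusing the single constant $C_H$ from the one-dimensional case at every step.
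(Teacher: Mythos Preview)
Your argument is correct. The paper does not supply its own proof of this lemma; it simply quotes the result from \cite[Lemma~B.3]{balan2016intermittency} (and cites \cite{mmv01} for the one-dimensional case), so there is no in-paper proof to compare against. Your inductive scheme---one-dimensional Hardy--Littlewood--Sobolev as the base case, Cauchy--Schwarz for the positive semidefinite $(n{-}1)$-variable product kernel in the induction step, and Fubini to reassemble the $L^{1/H}$ norm---is a standard route to the multiparameter inequality and naturally produces the geometric constant $C_H^n$.

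One minor simplification worth noting: the detour through positive semidefiniteness and Cauchy--Schwarz can be avoided by peeling off one coordinate at a time and applying the \emph{bilinear} one-dimensional HLS inequality $\int_\R\int_\R f(t)g(s)|t-s|^{2H-2}\,\ud t\,\ud s\le C_H\|f\|_{L^{1/H}}\|g\|_{L^{1/H}}$ directly at each step; this yields the same bound without the mollification issue you flag. Either way, the substance is the one-dimensional HLS plus iteration, exactly as you say.
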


\begin{lemma} \label{le:a1}
 Assume
  \begin{equation}\label{e:con-cg-wd}
  \begin{cases}
    -1<a<2\alpha-1, & \mbox{if } \beta\in(0,2) \\
    -1<a<\alpha\min(1+\gamma,2)-1, & \mbox{if } \beta=2.
  \end{cases}
\end{equation}
Then $C_{a, \beta+\gamma}<\infty$ where  $C_{a,\beta+\gamma}$ is given in \eqref{e:cgamma}. Furthermore, we have
  \begin{equation*}
    \int_{\R} |\cF Y(t,\cdot)(\xi)|^2|\xi|^a   \ud\xi = C_{a, \beta+\gamma} t^{2\beta+2\gamma-2-\frac{\beta (a+1)}{\alpha}}.
  \end{equation*}
\end{lemma}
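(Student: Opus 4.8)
The plan is to separate the statement into two parts: a scaling computation that yields the claimed power of $t$ and reduces everything to a single $t$-free integral, and a convergence analysis of that integral based on the Mittag-Leffler asymptotics.

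First I would use \eqref{e:fourier-Y}, namely $\cF Y(t,\cdot)(\xi)=t^{\beta+\gamma-1}E_{\beta,\beta+\gamma}(-2^{-1}\nu t^\beta|\xi|^\alpha)$, to write
\begin{equation*}
\int_{\R}|\cF Y(t,\cdot)(\xi)|^2|\xi|^a\,\ud\xi
= t^{2\beta+2\gamma-2}\int_{\R}E_{\beta,\beta+\gamma}\big(-2^{-1}\nu t^\beta|\xi|^\alpha\big)^2|\xi|^a\,\ud\xi .
\end{equation*}
The substitution $\xi=(2^{-1}\nu t^\beta)^{-1/\alpha}\eta$ then converts the right-hand side into $(2^{-1}\nu)^{-(a+1)/\alpha}\,t^{2\beta+2\gamma-2-\beta(a+1)/\alpha}\int_{\R}E_{\beta,\beta+\gamma}(-|\eta|^\alpha)^2|\eta|^a\,\ud\eta$, which by the definitions \eqref{e:cgamma12} and \eqref{e:cgamma} is precisely $C_{a,\beta+\gamma}\,t^{2\beta+2\gamma-2-\beta(a+1)/\alpha}$. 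Thus the identity is established as soon as the $\eta$-integral — which equals $(2^{-1}\nu)^{(a+1)/\alpha}C_{a,\beta+\gamma}$ — is shown to be finite, so the whole matter reduces to proving $C_{a,\beta+\gamma}<\infty$ under \eqref{e:con-cg-wd}.

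For the finiteness I would split $\int_{\R}E_{\beta,\beta+\gamma}(-|\eta|^\alpha)^2|\eta|^a\,\ud\eta$ over $\{|\eta|\le 1\}$ and $\{|\eta|>1\}$. On the bounded part, $\eta\mapsto E_{\beta,\beta+\gamma}(-|\eta|^\alpha)$ is continuous (with value $1/\Gamma(\beta+\gamma)$ at $0$), so integrability is equivalent to local integrability of $|\eta|^a$, i.e.\ to $a>-1$. On the unbounded part I would invoke Proposition~\ref{E-asymp}. For $0<\beta<2$ it gives $E_{\beta,\beta+\gamma}(-|\eta|^\alpha)=O(|\eta|^{-\alpha})$ as $|\eta|\to\infty$ (with strictly faster decay when $\gamma=0$, since the coefficient $1/\Gamma(\gamma)$ of the leading term vanishes), so the integrand is $O(|\eta|^{a-2\alpha})$ and the tail converges iff $a<2\alpha-1$. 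For $\beta=2$ the expansion has an oscillatory term of order $|\eta|^{-\alpha(1+\gamma)/2}$ together with an algebraic tail of order $|\eta|^{-\alpha}$; squaring and bounding the cross term by the geometric mean of the two, the integrand is controlled by $|\eta|^{a-\alpha(1+\gamma)}+|\eta|^{a-2\alpha}$ (using $\cos^2\le 1$), so the tail converges iff $a<\alpha\min(1+\gamma,2)-1$. Intersecting with $a>-1$ gives exactly \eqref{e:con-cg-wd}, hence $C_{a,\beta+\gamma}<\infty$, and the displayed identity holds for each fixed $t$.

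I expect the $\beta=2$ case to be the only delicate point: one has to treat the oscillatory Mittag-Leffler asymptotics with care, isolating the $\cos^2$ contribution (whose integrability at infinity is governed by $\alpha(1+\gamma)$, either crudely via $\cos^2\le 1$ or, if a sharp constant were required, via an averaging argument as in \cite[Lemma B.1]{chen2022moments}) from the algebraically decaying remainder, and verifying that the cross terms do not degrade the exponent. The case $0<\beta<2$ is a routine tail estimate once the asymptotic expansion is written down.
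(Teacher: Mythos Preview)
Your proof is correct and follows essentially the same route as the paper: a change of variables $\xi=(2^{-1}\nu t^\beta)^{-1/\alpha}\eta$ to extract the power of $t$ and reduce to the constant $C_{a,\beta+\gamma}$, followed by a finiteness check using continuity near the origin and the Mittag-Leffler asymptotics of Proposition~\ref{E-asymp} at infinity. Your treatment of the $\beta=2$ tail (splitting off the oscillatory and algebraic pieces and bounding $\cos^2\le1$) is slightly more explicit than the paper's, but the argument is the same.
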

\begin{proof}
By Proposition \ref{E-asymp}  we have, as $|\xi|\rightarrow\infty$,
  \begin{equation*}
    E_{\beta,\beta+\gamma}\left(-|\xi|^\alpha\right)=
  \begin{cases}
   \displaystyle -\frac{1}{\Gamma(\gamma) |\xi|^\alpha}+O(|\xi|^{-2\alpha}),                                                                                                      & \beta\in(0,2),\vspace{0.2cm} \\
   \displaystyle \frac{\cos\left(\sqrt{|\xi|^\alpha}-\pi(\gamma+1)/2\right)}{|\xi|^{\alpha(1+\gamma)/2}} +  \frac{1}{\Gamma(\gamma)|\xi|^\alpha}+O\left(|\xi|^{-2\alpha}\right) , & \beta=2.
   \end{cases}
  \end{equation*}
Note also that the function $E_{\beta,\beta+\gamma}(-|\xi|^\alpha)$ is continuous in $\xi\in\R$. Clearly, if \eqref{e:con-cg-wd} is satisfied, the constant $C_{a, \beta+\gamma}$ given in \eqref{e:cgamma} is finite. Then, by the change of variable $\xi=\left(2^{-1}\nu t^\beta\right)^{-\frac{1}{\alpha}}\eta$, we have
  \begin{equation*}
    \begin{split}
       \int_{\R}|\cF Y(t,\cdot)(\xi)|^2|\xi|^a \ud\xi &= \int_{\R}t^{2\beta+2\gamma-2} E_{\beta,\beta+\gamma}^2\left(-2^{-1}\nu t^{\beta}|\xi|^\alpha\right)|\xi|^a \ud\xi \\
         &= t^{2\beta+2\gamma-2-\frac{\beta(a+1)}{\alpha}} \left(2^{-1}\nu\right)^{-\frac{a+1}{\alpha}}\int_{\R} E_{\beta,\beta+\gamma}^2\left(-|\eta|^\alpha\right)|\eta|^a \ud\eta\\
         &=C_{a, \beta+\gamma}t^{2\beta+2\gamma-2-\frac{\beta (a+1)}{\alpha}}.
    \end{split}
  \end{equation*}
 This proves the lemma.
\end{proof}

The following result is borrowed from  \cite[Lemma 3.3]{balan2017intermittency}.
\begin{lemma}\label{le:t}
  Let $T_n(t)=\{(t_1,\dots,t_n);0<t_1<\dots<t_n<t\}$ for any $t>0$ and $n\geq1$. Then, for any $b_1,\dots,b_n>-1$, we have:
  \begin{equation*}
    I_n(t,b_1,\dots,b_n):=\int_{T_n(t)} \prod_{j=1}^{n}(t_{j+1}-t_j)^{b_j}dt_1\dots dt_n = \frac{\prod_{j=1}^{n}\Gamma(b_j+1)}{\Gamma(|b|+n+1)}t^{|b|+n},
  \end{equation*}
  where $|b|=\sum_{j=1}^{n}b_j$ and we denote $t_{n+1}=t$. Consequently, if there exist $M>\e>0$ such that $\e\leq b_j+1\leq M$ for all $j=1,\dots,n$, then
  \begin{equation*}
 \frac{c^n}{\Gamma(|b|+n+1)}t^{|b|+n}\le   I_n(t,b_1,\dots,b_n)\leq\frac{C^n}{\Gamma(|b|+n+1)}t^{|b|+n},
  \end{equation*}
  where $c=\inf_{x\in[\e,M]}\Gamma(x)$ and  $C=\sup_{x\in[\e,M]}\Gamma(x)$.
\end{lemma}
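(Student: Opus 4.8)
The plan is to prove the exact evaluation of $I_n(t,b_1,\dots,b_n)$ by induction on $n$, and then read off the two-sided estimate from continuity of the Gamma function on a compact interval.

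For the base case $n=1$, a direct computation gives
\[
I_1(t,b_1)=\int_0^t (t-t_1)^{b_1}\,dt_1=\frac{t^{b_1+1}}{b_1+1}=\frac{\Gamma(b_1+1)}{\Gamma(b_1+2)}\,t^{b_1+1},
\]
where $b_1>-1$ guarantees convergence and $\Gamma(b_1+2)=(b_1+1)\Gamma(b_1+1)$. For the inductive step, assuming the formula for $n-1$, I would rescale the first $n-1$ variables by setting $t_j=t_n\tau_j$, $j=1,\dots,n-1$, which turns $\{0<t_1<\cdots<t_{n-1}<t_n\}$ into $\{0<\tau_1<\cdots<\tau_{n-1}<1\}$ (with the convention $\tau_n=1$) and pulls a factor $t_n^{\,n-1+b_1+\cdots+b_{n-1}}$ out of the integrand. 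This yields
\[
I_n(t,b_1,\dots,b_n)=I_{n-1}(1,b_1,\dots,b_{n-1})\int_0^t (t-t_n)^{b_n}\,t_n^{\,n-1+b_1+\cdots+b_{n-1}}\,dt_n,
\]
and the remaining one-dimensional integral is a Beta integral equal to $t^{n+|b|}\,\Gamma(b_n+1)\Gamma(n+b_1+\cdots+b_{n-1})/\Gamma(n+|b|+1)$; here $n-1+b_1+\cdots+b_{n-1}>0$ because each $b_j>-1$, so there is no convergence issue. Substituting the induction hypothesis for $I_{n-1}(1,\cdot)$ and cancelling the factor $\Gamma(n+b_1+\cdots+b_{n-1})$ produces exactly the claimed closed form. (Equivalently, one could first scale out $t$ via $t_j\mapsto tt_j$ to reduce to $I_n(1,\cdot)$ and recognize it as a Dirichlet integral over the simplex of gap variables $u_0=t_1$, $u_j=t_{j+1}-t_j$, but the inductive route minimizes bookkeeping.)

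For the ``consequently'' part, since $\Gamma$ is positive and continuous on the compact set $[\e,M]\subset(0,\infty)$, it attains there a minimum $c=\inf_{x\in[\e,M]}\Gamma(x)>0$ and a maximum $C=\sup_{x\in[\e,M]}\Gamma(x)<\infty$; the hypothesis $\e\le b_j+1\le M$ then forces $c^n\le\prod_{j=1}^n\Gamma(b_j+1)\le C^n$, and inserting this into the exact formula yields both inequalities. The only point requiring care is the Gamma-argument bookkeeping in the inductive step and checking that the Beta integral converges (i.e. $n-1+b_1+\cdots+b_{n-1}>0$); there is no genuine obstacle here.
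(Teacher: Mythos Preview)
Your proof is correct. The induction via the rescaling $t_j=t_n\tau_j$ reduces the inner integral to $t_n^{\,n-1+b_1+\cdots+b_{n-1}}I_{n-1}(1,b_1,\dots,b_{n-1})$, the remaining $t_n$-integral is the Beta integral $B(b_n+1,\,n+b_1+\cdots+b_{n-1})\,t^{|b|+n}$, and the $\Gamma(n+b_1+\cdots+b_{n-1})$ cancels against the denominator from the induction hypothesis. The convergence check is also fine (in fact you only need the Beta exponent to exceed $-1$, i.e.\ $n+b_1+\cdots+b_{n-1}>0$, and your stronger bound $n-1+b_1+\cdots+b_{n-1}>0$ holds anyway). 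The consequence is immediate from positivity and continuity of $\Gamma$ on $[\e,M]$.

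The paper itself does not give a proof here: it simply records the statement and cites \cite[Lemma~3.3]{balan2017intermittency}. So your argument is a self-contained substitute for that citation. It is essentially the standard derivation of the Dirichlet/simplex integral; your parenthetical alternative (change to gap variables $u_0=t_1$, $u_j=t_{j+1}-t_j$ on the simplex $\{u_j\ge0,\ \sum u_j\le t\}$ and recognize a Dirichlet integral) is the one-shot version of the same computation and would also work without induction.
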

\bigskip
The following result is an extension of  \cite [Lemma A.3 ]{song2020fractional} which restricts $b\in[0,1]$.

\begin{lemma}\label{le2}
  For any $a>0$ and $b\in\R$, there exist constants $c,C>0$ depending on $a$ and $b$, such that
  \begin{equation}\label{estimate-gamma}
    c^n(n!)^a \le\Gamma(an+b)\le C^n(n!)^a,
  \end{equation}
  for all $n\in\N$ with $an+b>0$.

 For any $a>0$, there exist constants $c_1,c_2,C_1,C_2>0$ depending on $a$, such that
  \begin{equation}\label{estimate-sum}
    c_1\exp\left(c_2x^\frac{1}{a}\right)\leq \sum_{n=0}^{\infty}\frac{x^n}{(n!)^a}\leq C_1\exp\left(C_2x^\frac{1}{a}\right), \ \forall x>0.
  \end{equation}
\end{lemma}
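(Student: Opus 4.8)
The plan is to prove \eqref{estimate-gamma} first, since \eqref{estimate-sum} follows from it by a routine comparison of power series. For \eqref{estimate-gamma}, the natural tool is Stirling's asymptotic formula $\Gamma(x)\sim\sqrt{2\pi}\,x^{x-1/2}e^{-x}$ as $x\to\infty$, together with the elementary estimate $\log(n!)=n\log n-n+O(\log n)$. First I would write $\log\Gamma(an+b)=(an+b-\tfrac12)\log(an+b)-(an+b)+O(1)$ for $an+b$ bounded away from $0$, and compare this with $a\log(n!)=a(n\log n-n)+O(\log n)$. The difference is $(an+b)\log(an+b)-(an+b)-an\log n+an+O(\log n)=an\log a+O(\log n)$ after expanding $\log(an+b)=\log n+\log a+O(1/n)$. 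Hence $\Gamma(an+b)=(n!)^a\cdot e^{O(\log n)}\cdot a^{an}$, and since $a^{an}=(a^a)^n$ and $e^{O(\log n)}$ is dominated by $K^n$ and bounded below by $k^n$ for suitable constants, we obtain $c^n(n!)^a\le\Gamma(an+b)\le C^n(n!)^a$. Because only finitely many values of $n$ satisfy $an+b\le$ any fixed threshold (and those with $an+b>0$ give finitely many positive values of $\Gamma(an+b)$), the constants can be adjusted to absorb this finite range, so the bound holds for all $n\in\N$ with $an+b>0$.

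For \eqref{estimate-sum}, I would use \eqref{estimate-gamma} with the specific shift $b=1$, giving $c^n(n!)^a\le\Gamma(an+1)\le C^n(n!)^a$ for all $n\ge 0$. Therefore
\begin{equation*}
  \sum_{n=0}^\infty\frac{x^n}{(n!)^a}\le\sum_{n=0}^\infty\frac{C^nx^n}{\Gamma(an+1)}=E_{a,1}(Cx),
\end{equation*}
and similarly $\sum_{n\ge0}x^n/(n!)^a\ge E_{a,1}(cx)$, where $E_{a,1}$ is the Mittag-Leffler function from \eqref{e:mlf}. The claim then reduces to the two-sided bound $c_1\exp(c_2 z^{1/a})\le E_{a,1}(z)\le C_1\exp(C_2 z^{1/a})$ for $z>0$, which is a classical fact: the entire function $E_{a,1}$ has order $1/a$ and type $1$, so its growth along the positive axis is $\exp(z^{1/a}(1+o(1)))$; a clean self-contained argument is to bound the sum $\sum z^n/\Gamma(an+1)$ above and below by comparison with the maximal term, whose index is near $z^{1/a}/a^{\text{const}}$, using again Stirling for $\Gamma(an+1)$. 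Rescaling $z=Cx$ or $z=cx$ and absorbing constants into $C_1,C_2,c_1,c_2$ finishes \eqref{estimate-sum}.

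The main obstacle is purely bookkeeping: making sure the $O(\log n)$ error terms in the Stirling comparison are genuinely absorbed into geometric factors $k^n$ and $K^n$ uniformly, and handling the small-$n$ regime where $an+b$ may be negative or small (so that $\Gamma(an+b)$ is either undefined or has the "wrong" sign) — this is why the statement restricts to $n$ with $an+b>0$, and one must check the finitely many exceptional indices separately. There is no deep difficulty; the lemma is a quantitative packaging of Stirling's formula plus the standard order-and-type estimate for the Mittag-Leffler function. If one prefers to avoid invoking the order/type theory, the alternative is to prove the two-sided Mittag-Leffler bound directly via the maximal-term method sketched above, which keeps everything elementary.
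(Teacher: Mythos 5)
Your proposal is correct, and for the Gamma-function estimate \eqref{estimate-gamma} it follows essentially the same route as the paper: Stirling's formula applied to $\Gamma(an+b)$ and to $(n!)^a=\Gamma(n+1)^a$, identification of the ratio as $a^{an}$ times a factor of size $e^{O(\log n)}$, and absorption of that subgeometric factor into $c^n$ and $C^n$ (the paper does this multiplicatively via the explicit asymptotic $\Gamma(an+b)\sim (n!)^a a^{an+b-1/2} n^{b+1/2-a/2}(2\pi)^{1/2-a/2}$, you do it in logarithmic form; the content is identical). Your explicit remark about the finitely many small $n$ with $an+b>0$ is a point the paper glosses over and is handled correctly. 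The one genuine divergence is \eqref{estimate-sum}: the paper simply cites Lemma A.3 of an earlier work and proves nothing, whereas you derive it from \eqref{estimate-gamma} with $b=1$ by sandwiching $\sum_n x^n/(n!)^a$ between $E_{a,1}(cx)$ and $E_{a,1}(Cx)$ and then invoking the growth $E_{a,1}(z)\asymp \exp(z^{1/a})$ on the positive axis (order $1/a$, type $1$, or the maximal-term argument). That step is standard and correct, though as stated it is a sketch: to make it self-contained one should either justify the two-sided Mittag--Leffler bound for all $z>0$ (large $z$ by the asymptotics, small $z$ by continuity and positivity since $E_{a,1}(z)\ge 1$ for $z\ge 0$) or carry out the maximal-term comparison directly on $\sum_n x^n/(n!)^a$. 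Your version buys self-containedness at the cost of a little extra work; the paper's buys brevity at the cost of an external citation.
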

\begin{proof} The inequalities \eqref{estimate-sum} follow from \cite[Lemma A.3]{song2020fractional}.

 We shall prove
  \begin{equation}\label{e:asym-gamma-anb-na}
    \lim_{n\to\infty}\frac{\Gamma(an+b)}{(n!)^a\: a^{an+b-\frac12}n^{b-\frac12-\frac{a}{2}}(2\pi)^{\frac12-\frac{a}{2}}}=1.
  \end{equation}
 We use the notation $a(x)\sim b(x)$ to indicate that $\frac{a(x)}{b(x)}\to 1$ as $x\to\infty$.
 Using the Stirling’s formula
  \begin{equation*}
  \Gamma(x+1)\sim \sqrt{2\pi x}x^x e^{-x},\ \text{ as } x\to \infty,
  \end{equation*}
  we see that
  \begin{align}\label{e:asym-an-b}
  &\Gamma(an+b)=\frac{1}{an+b}\Gamma(an+b+1)\sim \frac{1}{an+b}\sqrt{2\pi (an+b)}\,(an+b)^{an+b} e^{-(an+b)}\notag\\
  &\sim \frac{1}{an}\sqrt{2\pi an}\, (an+b)^{an} (an+b)^be^{-(an+b)} \sim \frac{1}{an}\sqrt{2\pi an}\, (an)^{an} e^{b}(an)^be^{-(an+b)}\notag\\
 &  \sim \frac{1}{an}\sqrt{2\pi an}\, (an)^{an} (an)^be^{-an}.
  \end{align}
  Note also that
  \begin{equation}\label{e:asym-gamma-n}
    n!=\Gamma(n+1)\sim \sqrt{2\pi n}n^n e^{-n}.
  \end{equation}
  From \eqref{e:asym-an-b} and \eqref{e:asym-gamma-n}, we obtain
  \begin{equation*}
       \Gamma(an+b)\sim  (n!)^a a^{an+b-\frac12}n^{b+\frac12-\frac{a}{2}}(2\pi)^{\frac12-\frac{a}{2}},
  \end{equation*}
  which is \eqref{e:asym-gamma-anb-na}.

  It implies from the limit in \eqref{e:asym-gamma-anb-na} that there exists two constants $L_1\in (0,1)$ and  $ L_2>1$ such that
  \begin{equation}\label{eqn-1}
  L_1 (n!)^a a^{an+b-\frac12}n^{b+\frac12-\frac{a}{2}}(2\pi)^{\frac12-\frac{a}{2}}\leq  \Gamma(an+b)\leq L_2 (n!)^a a^{an+b-\frac12}n^{b+\frac12-\frac{a}{2}}(2\pi)^{\frac12-\frac{a}{2}}, \ \forall n\in \mathbb{N}.
  \end{equation}
Then, there must exist some $C>1$ and $0<c <1 $ such that
\begin{equation}\label{eqn-2}
L_2  a^{an+b-\frac12}n^{b+\frac12-\frac{a}{2}}(2\pi)^{\frac12-\frac{a}{2}}\leq C^n,  \ \forall n\in \mathbb{N}
\end{equation}
and
\begin{equation}\label{eqn-3}
L_1  a^{an+b-\frac12}n^{b+\frac12-\frac{a}{2}}(2\pi)^{\frac12-\frac{a}{2}}\geq c^n, \ \forall n\in \mathbb{N}.
\end{equation}
Finally, we combine \eqref{eqn-1}-\eqref{eqn-3} to obtain \eqref{estimate-gamma}.
\end{proof}

The following result is an extension of  \cite [Lemma A.5]{song2020fractional}.

\begin{lemma}\label{le:integral-space-nonnegative}
  Assume that either $0\leq\beta\le1$ or $\beta=2$ and $\gamma=0$. Then, under \eqref{e:DL}, for $H\in(0,1/2)$ and $r,s>0$, we have
  \begin{equation*}
  0\le  \int_{\R} \cF Y(r,\cdot)(\eta)\cF Y(s,\cdot)(\eta) |\eta|^{1-2H} \ud \eta < \infty.
  \end{equation*}
\end{lemma}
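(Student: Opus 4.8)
The plan is to establish finiteness and nonnegativity separately, the latter in the two regimes $\beta\in(0,1]$ and $\beta=2,\gamma=0$. For finiteness, note that $\cF Y(t,\cdot)(\eta)$ is real and the integrand is even in $\eta$, so the Cauchy--Schwarz inequality gives
\[
\int_{\R}\bigl|\cF Y(r,\cdot)(\eta)\,\cF Y(s,\cdot)(\eta)\bigr|\,|\eta|^{1-2H}\,\ud\eta\le\Bigl(\int_{\R}|\cF Y(r,\cdot)(\eta)|^2|\eta|^{1-2H}\,\ud\eta\Bigr)^{1/2}\Bigl(\int_{\R}|\cF Y(s,\cdot)(\eta)|^2|\eta|^{1-2H}\,\ud\eta\Bigr)^{1/2}.
\]
Under \eqref{e:DL} the exponent $a=1-2H$ satisfies \eqref{e:con-cg-wd} (one has $-1<1-2H<2(1-2H)<2\alpha-1$ when $\beta\in(0,2)$, and likewise $1-2H<\alpha\min(1+\gamma,2)-1$ when $\beta=2$), so Lemma \ref{le:a1} shows both factors on the right are finite; this settles the finiteness part.

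For the nonnegativity when $\beta\in(0,1]$, recall from \eqref{e:fourier-Y} that $\cF Y(t,\cdot)(\xi)=t^{\beta+\gamma-1}E_{\beta,\beta+\gamma}(-2^{-1}\nu t^\beta|\xi|^\alpha)$. Since $\beta\le1$ and $\gamma\ge0$ (so that $\beta\le\beta+\gamma$), the complete monotone property of the Mittag--Leffler function recalled in Remark \ref{rem:explanation} gives $E_{\beta,\beta+\gamma}(-x)\ge0$ for all $x\ge0$, hence $\cF Y(t,\cdot)(\xi)\ge0$ for every $t>0$ and $\xi\in\R$. Therefore the integrand $\cF Y(r,\cdot)(\eta)\,\cF Y(s,\cdot)(\eta)\,|\eta|^{1-2H}$ is pointwise nonnegative, and the conclusion follows at once.

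The substantive case is $\beta=2,\gamma=0$, where $\cF Y$ changes sign and a pointwise argument is unavailable. Using $E_{2,2}(z)=\sinh(\sqrt z)/\sqrt z$, so that $E_{2,2}(-v^2)=\sin v/v$, formula \eqref{e:fourier-Y} gives $\cF Y(t,\cdot)(\xi)=\sin(c\,t|\xi|^{\alpha/2})/(c\,|\xi|^{\alpha/2})$ with $c=\sqrt{\nu/2}$. With $a:=c|r-s|$ and $b:=c(r+s)$, the identity $\sin A\sin B=\tfrac12(\cos(A-B)-\cos(A+B))$, the evenness in $\eta$, and the change of variables $y=|\eta|^{\alpha/2}$ reduce the integral to
\[
\int_{\R}\cF Y(r,\cdot)(\eta)\,\cF Y(s,\cdot)(\eta)\,|\eta|^{1-2H}\,\ud\eta=\frac{2}{\alpha c^2}\int_{0}^{\infty}\bigl(\cos(ay)-\cos(by)\bigr)\,y^{-\mu}\,\ud y,\qquad \mu:=3-\frac{4(1-H)}{\alpha}.
\]
One must keep the two cosine terms together, since each is separately non-integrable near $y=0$ once $\alpha\ge2-2H$; writing $\cos(ay)-\cos(by)=(1-\cos(by))-(1-\cos(ay))$ and rescaling each term yields $(b^{\mu-1}-a^{\mu-1})K_\mu$ with $K_\mu:=\int_0^\infty(1-\cos v)v^{-\mu}\,\ud v$. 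Condition \eqref{e:DL} for $\beta=2,\gamma=0$ is $\alpha>3-4H$, and since $H<1/2$ we have $3-4H>2-2H$, hence $\alpha>2-2H$, which is precisely $\mu\in(1,3)$; for such $\mu$ one has $0<K_\mu<\infty$ and $\mu-1>0$, so $(b^{\mu-1}-a^{\mu-1})K_\mu\ge0$ because $0\le a<b$. This completes the argument. The only point requiring care is the reduction to the classical integral $\int_0^\infty(\cos ay-\cos by)y^{-\mu}\,\ud y$ together with the verification — via \eqref{e:DL} and $H<1/2$ — that the resulting exponent $\mu$ lands in the admissible window $(1,3)$.
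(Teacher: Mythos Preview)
Your proof is correct and follows essentially the same route as the paper. The only cosmetic differences are that you invoke Cauchy--Schwarz plus Lemma~\ref{le:a1} for finiteness (the paper appeals directly to the asymptotics underlying Lemma~\ref{le:a1}), and in the case $\beta=2,\gamma=0$ you pass to $y=|\eta|^{\alpha/2}$ and use $1-\cos$ while the paper stays in the $\eta$ variable and uses $2\sin^2$; since $1-\cos v=2\sin^2(v/2)$, the two computations are identical and your exponent $\mu-1=2-\tfrac{4(1-H)}{\alpha}$ coincides with the paper's $\tfrac{2}{\alpha}(\alpha+2H-2)$.
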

\begin{proof}
  When $\beta\in(0,1]$, we have $\cF Y(t,\cdot)(\eta)=t^{\beta+\gamma-1}E_{\beta,\beta+\gamma}(-2^{-1}\nu t^\beta|\eta|^\alpha)\ge0$ (see \cite[Section 4.10.2]{gorenflo2020mittag}).
  Moreover, condition \eqref{e:DL} implies
\begin{equation*}
     2(1-2H)<2\alpha-1.
   \end{equation*}
Then, in this case we use similar arguments in Lemma \ref{le:a1} with $a=1-2H$ to obtain
\[
\int_{\R} |\cF Y(r,\cdot)(\eta)\cF Y(s,\cdot)(\eta)| |\eta|^{1-2H} \ud \eta < \infty.
\]

  When $\beta=2$ and $\gamma=0$, condition \eqref{e:DL} implies
  $ 2(1-2H)<\alpha-1$, which yields $\alpha+2H-2>1-2H>0$.
   Then
  \begin{align}\label{Fourier-Y-nonneg}
    &\int_{\R} \cF Y(r,\cdot)(\eta)\cF Y(s,\cdot)(\eta) |\eta|^{1-2H} \ud \eta\notag \\
    &=\frac{2}{\nu} \int_{\R} \sin\left(\sqrt{\nu/2} r|\eta|^\frac\alpha 2 \right)\sin\left( \sqrt{\nu/2}s|\eta|^\frac\alpha 2\right)|\eta|^{1-2H-\alpha}\ud\eta \notag\\
    &=\frac{1}{\nu}\int_{\R}\left[\cos\left(\sqrt{\nu/2}(r-s)|\eta|^\frac\alpha2 \right)-\cos\left(\sqrt{\nu/2}(r+s)|\eta|^\frac\alpha2 \right) \right]|\eta|^{1-2H-\alpha}\ud\eta\notag\\
    &=\frac{1}{\nu} \int_{\R}\left(\left[1-2\sin^2\left(\sqrt{\nu/2}(r-s)|\eta|^\frac\alpha2 /2\right) \right]-\left[1-2\sin^2\left(\sqrt{\nu/2}(r+s)|\eta|^\frac\alpha2 /2\right) \right]\right)|\eta|^{1-2H-\alpha}\ud\eta \notag\\
    &=\frac{2}{\nu} \int_{\R}\sin^2\left(\sqrt{\nu/2}(r+s)|\eta|^\frac\alpha2 /2\right)|\eta|^{1-2H-\alpha}\ud\eta -\frac{2}{\nu}\int_{\R}\sin^2\left(\sqrt{\nu/2}(r-s)|\eta|^\frac\alpha2 /2\right)|\eta|^{1-2H-\alpha}\ud\eta\notag\\
    &=\frac{2}{\nu} \left(\bigg|\frac{r+s}{2}\bigg|^{\frac{2}{\alpha}(\alpha+2H-2)}-\bigg|\frac{r-s}{2}\bigg|^{\frac{2}{\alpha}(\alpha+2H-2)}\right) \int_{\R} \sin^2\left(\sqrt{\nu/2}|\eta|^\frac\alpha2\right)|\eta|^{1-2H-\alpha}\ud\eta\ge0,
  \end{align}
  where we have used the change of variables in the last equality and $\frac{2}{\alpha}(\alpha+2H-2)>0$. By changing variables, we get
  \begin{equation}\label{eqn-sine}
  \begin{aligned}
  &\int_{\R} \sin^2\left(\sqrt{\nu/2}|\eta|^\frac\alpha2\right)|\eta|^{1-2H-\alpha}\ud\eta=2\int_0^\infty \sin^2\left(\sqrt{\nu/2}\eta^\frac\alpha2\right)\eta^{1-2H-\alpha}\ud\eta\\
  &=\frac{4}{\alpha}\left(\frac{\nu}{2}\right)^{\frac{1}{\alpha}(\alpha+2H-1)}\int_0^\infty\sin^2(\eta)\eta^{-\left(\frac{2}{\alpha}(\alpha+2H-2)+1\right)}\ud \eta<\infty,
  \end{aligned}
  \end{equation}
 where the finiteness follows from \cite[Lemma A.4 ]{song2020fractional}  noting $\frac{2}{\alpha}(\alpha+2H-2)+1\in (1,3)$. Combining \eqref{Fourier-Y-nonneg} and  \eqref{eqn-sine}, one can obtain the desired result.
  \end{proof}

\begin{lemma}\label{le:integral-time-low-b}
  For any $0<\eta<(2/\nu)^{1/\alpha}$, we have
  \begin{equation*}
    \int_{0}^{\infty}e^{-r} r^{\beta+\gamma-1}E_{\beta,\beta+\gamma}\left(-2^{-1}\nu r^\beta|\eta|^\alpha\right) \ud r=\frac{1}{1+2^{-1}\nu|\eta|^\alpha}.
  \end{equation*}
\end{lemma}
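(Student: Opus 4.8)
The plan is to evaluate the integral by expanding the Mittag-Leffler function via its defining series \eqref{e:mlf} and integrating term by term against the Gamma kernel $e^{-r}r^{\beta+\gamma-1}$. Concretely, I would first write
\begin{equation*}
  r^{\beta+\gamma-1}E_{\beta,\beta+\gamma}\left(-2^{-1}\nu r^\beta|\eta|^\alpha\right) = \sum_{k=0}^{\infty}\frac{\left(-2^{-1}\nu|\eta|^\alpha\right)^k}{\Gamma(\beta k+\beta+\gamma)}\, r^{\beta k+\beta+\gamma-1},
\end{equation*}
noting that $\beta k+\beta+\gamma>0$ for all $k\ge 0$ since $\beta,\gamma\ge 0$ and $\beta>0$, so each power of $r$ is integrable against $e^{-r}$ near $0$.

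Next I would justify the interchange of summation and the integral $\int_0^\infty e^{-r}(\cdot)\,\ud r$ by Tonelli's theorem applied to the series of absolute values: using the Gamma integral $\int_0^\infty e^{-r}r^{\beta k+\beta+\gamma-1}\,\ud r = \Gamma(\beta k+\beta+\gamma)$, one gets
\begin{equation*}
  \sum_{k=0}^{\infty}\frac{\left(2^{-1}\nu|\eta|^\alpha\right)^k}{\Gamma(\beta k+\beta+\gamma)}\int_{0}^{\infty}e^{-r}r^{\beta k+\beta+\gamma-1}\,\ud r = \sum_{k=0}^{\infty}\left(2^{-1}\nu|\eta|^\alpha\right)^k < \infty,
\end{equation*}
where finiteness holds precisely because the hypothesis $0<\eta<(2/\nu)^{1/\alpha}$ forces $2^{-1}\nu|\eta|^\alpha<1$. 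This is the only genuine point that needs care, and it is mild; it also explains why the threshold $(2/\nu)^{1/\alpha}$ is exactly the right one — it is the radius of convergence of the geometric series that appears.

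Once the interchange is licensed, the identical computation without absolute values yields
\begin{equation*}
  \int_{0}^{\infty}e^{-r} r^{\beta+\gamma-1}E_{\beta,\beta+\gamma}\left(-2^{-1}\nu r^\beta|\eta|^\alpha\right)\ud r = \sum_{k=0}^{\infty}\left(-2^{-1}\nu|\eta|^\alpha\right)^k = \frac{1}{1+2^{-1}\nu|\eta|^\alpha},
\end{equation*}
the last equality being the geometric series with ratio $-2^{-1}\nu|\eta|^\alpha\in(-1,0]$, which is exactly the claimed identity. As an alternative to the series manipulation, one could simply invoke the standard Laplace transform formula $\int_0^\infty e^{-pt}t^{b-1}E_{a,b}(\lambda t^a)\,\ud t = p^{a-b}/(p^a-\lambda)$, valid for $\mathrm{Re}(p)>|\lambda|^{1/a}$ (see \cite{podlubny1998fractional}), with $p=1$, $a=\beta$, $b=\beta+\gamma$ and $\lambda=-2^{-1}\nu|\eta|^\alpha$; the convergence requirement $1>(2^{-1}\nu|\eta|^\alpha)^{1/\beta}$ is again precisely the stated hypothesis. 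I expect the term-by-term integration to be the only step deserving explicit justification; everything else is the elementary Gamma integral and the geometric series sum.
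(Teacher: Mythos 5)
Your proposal is correct and follows essentially the same route as the paper: expand $E_{\beta,\beta+\gamma}$ by its defining series, integrate term by term via the Gamma integral $\int_0^\infty e^{-r}r^{\beta k+\beta+\gamma-1}\,\ud r=\Gamma(\beta k+\beta+\gamma)$, and sum the resulting geometric series, whose convergence is exactly the hypothesis $0<\eta<(2/\nu)^{1/\alpha}$. Your explicit Tonelli justification of the interchange is a detail the paper leaves implicit, but the argument is the same.
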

\begin{proof}
 Using \eqref{e:mlf}, we have
  \begin{align*}
     & \int_{0}^{\infty}e^{-r} r^{\beta+\gamma-1}E_{\beta,\beta+\gamma}\left(-2^{-1}\nu r^\beta|\eta|^\alpha\right) \ud r \\
     &= \sum_{k=0}^{\infty}\left(-2^{-1}\nu|\eta|^\alpha\right)^k \frac{1}{\Gamma(\beta k+\beta+\gamma)}\int_{0}^{\infty}r^{\beta k+
     \beta+\gamma-1}e^{-r}\ud r\\
     &= \sum_{k=0}^{\infty}\left(-2^{-1}\nu|\eta|^\alpha\right)^k=\frac{1}{1+2^{-1}\nu|\eta|^\alpha}.
  \end{align*}
  The proof is completed.
\end{proof}

\noindent{\bf Acknowledgements.}
J. Song is partially supported by National Natural Science Foundation of China grant 12071256, and Major Basic Research Program of the Natural Science Foundation
of Shandong Province in China ZR2019ZD42 and ZR2020ZD24.

\bibliographystyle{plain}
\bibliography{bibliography}
\end{document}